\documentclass{amsproc}
\usepackage{amsmath}
\usepackage{amsfonts}
\usepackage{amssymb}
\usepackage{amsthm}
\usepackage[utf8]{inputenc}
\usepackage{breqn}
\usepackage{afterpage}
\usepackage{longtable}
\usepackage{indentfirst}
\usepackage{caption}
\usepackage{subcaption}
\usepackage{graphicx}
\graphicspath{ {./images/} }
\newtheorem{theorem}{Theorem}[section]
\newtheorem{lemma}[theorem]{Lemma}
\newtheorem{corollary}[theorem]{Corollary}
\newtheorem{proposition}[theorem]{Proposition}

\theoremstyle{definition}
\newtheorem{definition}[theorem]{Definition}

\theoremstyle{remark}
\newtheorem{remark}[theorem]{Remark}

\numberwithin{equation}{section}

%    Absolute value notation

%    Blank box placeholder for figures (to avoid requiring any
%    particular graphics capabilities for printing this document).

\begin{document}

\title{A mean value criterion for plurisubharmonic functions}

%    Information for first author
\author{Karim Rakhimov}
\author{Shomurod Shopulatov}

%    Address of record for the research reported here

\address{CNRS, Univ. Lille, UMR 8524 - Laboratoire Paul Painleve, F-59000 Lille, France}
\email{karimjon1705@gmail.com}
\address{V.I.Romanovskiy Institute of Mathematics AS RUz, Tashkent, Uzbekistan}
\email{shomurod\_shopulatov@mail.ru}
%    Current address
\curraddr{}

\email{}
%    \thanks will become a 1st page footnote.
%\thanks{The first author was supported in part by NSF Grant \#000000.}

%    General info
\subjclass[2010]{31B05,31C10,32U15}
\date{}

\keywords{Plurisubharmonic functions; Blaschke-Privalov theorem; $p(t)$-subharmonic functions}

%\dedicatory{This paper is dedicated to our advisors.}

\keywords{}

\begin{abstract}
In this paper we prove a criterion  for plurisubharmonic functions in terms of integral mean by complex ellipsoids. Moreover, by using the  criterion we prove an analogue of Blaschke-Privalov theorem for plurisubharmonic functions. 
\end{abstract}

\maketitle

\section{Introduction}
It is well known that an upper semi-continuous function $u$ is subharmonic on $D\subset \mathbb{R}^n$ if the mean value over the small spheres or over the small balls at any point of $D$ is equal or greater then the value of the function at this point. Plurisubharmonic ($psh$) functions (in  $\mathbb{C}^n$) are defined by using subharmonicity at complex lines. In the first part of this work we introduce an integral criterion for $psh$ functions so that it can be a subharmonic-like definition for $psh$ functions. In order to state the first main result of the paper we need to introduce some notions. We consider in $ \mathbb {C}^{n} $ the following class of ellipsoids
$$ E(r_1,\ldots ,r_n) = \left\{ \frac {|z_1|^2} {r_1^2} +\cdots+ \frac {|z_n|^2} {r_n^2}  \leq 1 \right\}, $$
where $r_j>0$ for any $ 1 \le j \le n $. By taking $r_1=R$  and $r_j=r $ for all $ 2 \le j \le n $  in $ E(r_1,\ldots ,r_n) $ let us denote
$ E(R,r) :=E(R,r,\ldots,r) ,$ where $R$ an $r$ are positive numbers.

It is not difficult to check that for any unitary matrix $T$ and for any complex line $l \subset \mathbb{C}^n $  passing through origin  $(T\circ E(r_1,\ldots ,r_n))\cap l$ is a disc in $l$. Let $D\subset \mathbb{C}^n$ be a domain. For integrable function  $u$ on $D$ we consider the following mean value over $ E(r_1,\ldots, r_n) $
$$M_u(z^0, T, E(r_1,\ldots,r_n))= \frac {1} {V(r_1,\ldots,r_n)} \int\limits_{z^0 + T\circ E(r_1,\ldots ,r_n)} u(\xi) dV(\xi),$$
where  $V(r_1,\ldots ,r_n) = \int\limits_{E(r_1,\ldots,r_n)} dV(\xi) = \frac {\pi^n r_1^{2}\cdots r_n^{2}  } {n!} $ is the volume of $ E(r_1,\ldots,r_n) $.

The following theorem gives a criterion for $ psh $ functions in terms of $ E(r_1,\ldots,r_n) $  and $ E(R,r)$ ellipsoids. We prove that  $psh$  functions satisfy the mean value integral inequality in terms of $ E(r_1,\ldots,r_n) $  and $ E(R,r)$ ellipsoids and on the other hand upper semi-continuous function satisfying one of the inequalities is  $psh$ . The theorem states as follows 
 \begin{theorem}\label{intromeanvaluecriterion}
Let $D\subset \mathbb{C}^n$ be a domain and $u$ be an upper semi-continuous function on $D$. Then the following properties are equivalent:
\begin{enumerate}
    \item[a)] $u$ is $psh$ on $D$;
    \item[b)]  for any  unitary matrix $T$  with  $ T\circ E(r_1,\ldots ,r_n)\subset D$ the following inequality holds
$$  u(z^0) \leq M_u(z^0, T, E(r_1,\ldots ,r_n));$$

    \item[c)] for any unitary matrix $T$ there exists $r_0>0$ small enough such that for any $(R,r)$ with $\max\{R, r\}\le r_0$ the following inequality holds
$$
  u(z^0) \leq M_u(z^0, T, E(R,r));
$$

\end{enumerate}

 \end{theorem}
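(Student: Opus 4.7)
The implication (b) $\Rightarrow$ (c) is immediate, as $E(R,r)$ is $E(r_1,\ldots,r_n)$ with $r_1=R$ and $r_2=\cdots=r_n=r$; the substance lies in the other two implications.

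For (a) $\Rightarrow$ (b): translating $z^0$ to the origin and applying the unitary substitution $\zeta = T^{-1}\xi$ (both of which preserve plurisubharmonicity) reduces the claim to $v(0)\le V(r_1,\ldots,r_n)^{-1}\int_{E(r_1,\ldots,r_n)} v\,dV$ for an arbitrary $psh$ function $v$. The ellipsoid $E:=E(r_1,\ldots,r_n)$ is star-shaped about $0$, and the Hopf-type fibration of $\mathbb{C}^n \setminus \{0\}$ by complex lines through the origin gives
\[
\int_E v\,dV \;=\; \int_{\mathbb{CP}^{n-1}}\!\!\int_0^{R([\omega])}\!\!\int_0^{2\pi} v(\rho e^{i\theta}\omega)\,d\theta\,\rho^{2n-1}\,d\rho\,d\tilde\sigma([\omega]),
\]
where $\omega\in S^{2n-1}$ is a representative of $[\omega]$ and $R([\omega])=(\sum_j|\omega_j|^2/r_j^2)^{-1/2}$ is the radius of the disc $\mathbb{C}\omega\cap E$. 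Since $v$ is $psh$, the restriction $\lambda\mapsto v(\lambda\omega)$ is subharmonic, so $\int_0^{2\pi} v(\rho e^{i\theta}\omega)\,d\theta\ge 2\pi v(0)$. Comparing with the analogous computation for $v\equiv 1$ (which returns $V(r_1,\ldots,r_n)$) yields $\int_E v\,dV\ge v(0)\cdot V(r_1,\ldots,r_n)$.

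The key step is (c) $\Rightarrow$ (a), which I first prove for smooth $u$ by Taylor expansion at $z^0=0$. The torus symmetry $z_j\mapsto e^{i\theta_j}z_j$ of $E(R,r)$ annihilates every monomial $z^\alpha\bar z^\beta$ that is not Hermitian-balanced, so all Taylor coefficients except the constant and the Hermitian-quadratic ones integrate to zero, and a direct computation of $\int_{E(R,r)}|z_i|^2\,dV$ gives
\[
M_u(0,T,E(R,r)) \;=\; u(0) + \frac{1}{n+1}\Big(R^2\,L_u(Te_1) + r^2\sum_{j=2}^{n} L_u(Te_j)\Big) + O\!\big(\max(R,r)^3\big),
\]
with Levi form $L_u(w):=\sum_{i,j}\partial_i\bar\partial_j u(0)\,w_i\bar w_j$. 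Specializing to $r=R^2$, dividing by $R^2$, and letting $R\to 0$ in (c) forces $L_u(Te_1)\ge 0$; since $Te_1$ ranges over all unit vectors as $T$ varies, the Levi form is positive semi-definite and $u$ is $psh$.

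To pass to upper semi-continuous $u$: taking $R=r$ in (c) recovers the standard ball mean value inequality, so $u$ is classically subharmonic on $D\subset\mathbb{R}^{2n}$, and hence the mollifications $u_\epsilon := u*\chi_\epsilon$ (with a radial nonnegative bump) are smooth and decrease to $u$ as $\epsilon\searrow 0$. Convolution with $\chi_\epsilon$ commutes with the translation-invariant mean value operator, so $u_\epsilon$ inherits (c) on $\{z\in D:\mathrm{dist}(z,\partial D)>\epsilon\}$; by the smooth case each $u_\epsilon$ is $psh$, and therefore $u=\lim_{\epsilon\searrow 0}u_\epsilon$ is $psh$ as a decreasing limit of $psh$ functions (the case $u\equiv -\infty$ is trivial). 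The main obstacle I anticipate is the Taylor-level asymptotic in (c) $\Rightarrow$ (a): the two radii $R$ and $r$ must be scaled at sufficiently different rates (the choice $r=R^2$ works) so as to isolate the contribution $L_u(Te_1)$ from the perpendicular directions, and the remainder $O(\max(R,r)^3)$ must be controlled uniformly in $T$ and across the parameter range.
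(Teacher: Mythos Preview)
Your (a) $\Rightarrow$ (b) via slicing the ellipsoid by complex lines through $z^0$ is correct and is a clean alternative to the paper's route (which proves monotonicity of $M_u$ in each $r_j$ separately and then compares with the ball $r_1=\cdots=r_n$). The genuine gap is in (c) $\Rightarrow$ (a), in the passage from smooth to general upper semi-continuous $u$. Condition (c) furnishes, for each base point $z$ and each $T$, a threshold $r_0=r_0(z,T)>0$, with no uniformity in $z$. When you convolve,
\[
M_{u_\epsilon}(z^0,T,E(R,r))-u_\epsilon(z^0)=\int \chi_\epsilon(y)\,\bigl[M_u(z^0-y,T,E(R,r))-u(z^0-y)\bigr]\,dy,
\]
and nonnegativity of the integrand requires $\max(R,r)\le r_0(z^0-y,T)$ for every $y\in\operatorname{supp}\chi_\epsilon$. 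Nothing in (c) rules out $\inf_{|y|\le\epsilon}r_0(z^0-y,T)=0$, so you cannot conclude that $u_\epsilon$ satisfies (c) at $z^0$, and hence your Taylor argument for the smooth case is never triggered. The subharmonicity you extract from $R=r$ does not help here, since it gives no control over ellipsoid means with $R\neq r$.

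The paper avoids mollification entirely: it fixes a complex line $l$, rotates so that $l=\{z_2=\cdots=z_n=0\}$, writes $M_u(z^0,\mathrm{Id},E(R,r))$ via Fubini as an integral over $|z_1|\le R$ of transverse $(n{-}1)$-dimensional averages, and sends $r\to 0$ with $R$ fixed. Upper semi-continuity (through a decreasing sequence of continuous majorants $u_j\downarrow u$) bounds the transverse averages from above by $u(z_1,0,\ldots,0)$, yielding the weighted disc inequality
\[
u(z^0)\le \frac{n}{\pi R^2}\int_{|z_1|\le R}\Bigl(1-\frac{|z_1|^2}{R^2}\Bigr)^{n-1}u(z_1,0,\ldots,0)\,dV(z_1)
\]
for all sufficiently small $R$, which forces $u|_l$ to be subharmonic. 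Because this argument uses the hypothesis only at the single point $z^0$, the point-dependence of $r_0$ never interferes.
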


An interesting criterion for subharmonic functions is given by Blaschke and Privalov (see \cite{B69}). It asserts that an upper semi-continuous in the domain $D \subset \mathbb {R}^{n}$ function $u(x)$ with $u(x)\not\equiv -\infty,$  is subharmonic if and only if
$ \overline{\bigtriangleup} u(x)\geq 0 $ for all $ x^0\in D\setminus u_{-\infty}$ (see below Theorem \ref{B-P}). Here $u_{-\infty}:=\{x\in D:u(x)=-\infty\}$ and $ \overline{\bigtriangleup} u(x) $ is a generalised Laplace operator of the function $ u $ at the point $ x $ constructed by the mean over the spheres or by the mean of balls (see \cite{B16}, \cite{B69} \cite{P25}, \cite{P41}, \cite{P412}, \cite{SSh21}).
In the section 3 we give also an analogue of Blaschke-Privalov theorem for $psh$ functions (Theorem \ref{analogBP}). In the proof we essentially use Theorem \ref{ptBP} (Appendix \ref{appendixA}).

For this purpose we define the following
 $$ \overline{D}_T u(z^0)=\varlimsup_{R\to 0}\varlimsup_{r\to 0}\frac{M_u(z^0, T, E(R,r))-u(z^0)}{R^2} $$
 and let $ \overline{D} u(z^0)= \inf_{T}\overline{D}_T u(z^0)$ where the infimum taken all over the unitary matrices.
  \begin{theorem}\label{introBPpsh} An upper semi-continuous function $u$ in the domain $D \subset \mathbb {C}^{n}$ with $u(z)\not\equiv -\infty$,  is  $psh$   if and only if
$$ \overline{D} u(z)\geq 0 \mbox{ for all } z\in D\setminus u_{-\infty}. $$
 \end{theorem}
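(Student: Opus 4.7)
The forward direction $(\Rightarrow)$ is immediate from Theorem~\ref{intromeanvaluecriterion}(c): if $u$ is $psh$, then for every unitary $T$ there exists $r_0>0$ such that $u(z^0)\leq M_u(z^0,T,E(R,r))$ whenever $\max\{R,r\}\leq r_0$; hence $\overline{D}_T u(z^0)\geq 0$ for every $T$, and so $\overline{D}u(z^0)\geq 0$ throughout $D$ (the inequality is trivial on $u_{-\infty}$ since there $u(z^0)=-\infty$).

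For the converse $(\Leftarrow)$, suppose $\overline{D}u(z)\geq 0$ on $D\setminus u_{-\infty}$. Because $\overline{D}u=\inf_T\overline{D}_T u$, the hypothesis forces $\overline{D}_T u(z)\geq 0$ for \emph{every} unitary $T$ at every such $z$. I would fix $z^0\in D$ and a unit vector $v\in\mathbb{C}^n$, choose a unitary $T$ with $Te_1=v$, and aim to show that $f(\zeta):=u(z^0+\zeta v)$ is subharmonic (or identically $-\infty$) on its disc of definition; since $z^0$ and $v$ are arbitrary, this will give plurisubharmonicity of $u$. Applying Fubini to $M_u(z^0,T,E(R,r))$ with $\xi_1$ (the coordinate along $v$) separated from $\eta\in\mathbb{C}^{n-1}$, and using that the $\eta$-slice of $E(R,r)$ at fixed $\xi_1$ is a Euclidean ball of radius $r\sqrt{1-|\xi_1|^2/R^2}$, one obtains
\[
M_u(z^0,T,E(R,r)) \;=\; \frac{n}{\pi R^2}\int_{|\xi_1|\leq R}\bigl(1-|\xi_1|^2/R^2\bigr)^{n-1}\,U(\xi_1,r)\,dA(\xi_1),
\]
where $U(\xi_1,r)$ denotes the volume-mean of $u$ over that slice.

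Passing to the limit $r\to 0$, upper semi-continuity of $u$ gives $\varlimsup_{r\to 0}U(\xi_1,r)\leq f(\xi_1)$ pointwise, and $u$ is locally bounded above, so reverse Fatou yields $\varlimsup_{r\to 0}M_u(z^0,T,E(R,r))\leq \widetilde{M}_f(0,R)$, with
\[
\widetilde{M}_f(\zeta_0,R) \;:=\; \frac{n}{\pi R^2}\int_{|\zeta|\leq R} f(\zeta_0+\zeta)\bigl(1-|\zeta|^2/R^2\bigr)^{n-1}\,dA(\zeta).
\]
This is precisely the disc mean of $f$ with respect to the radial density $p(t)=n(1-t^2)^{n-1}$. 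Taking $\varlimsup_{R\to 0}$ and invoking the hypothesis gives
\[
\varlimsup_{R\to 0}\frac{\widetilde{M}_f(0,R)-f(0)}{R^2} \;\geq\; \overline{D}_T u(z^0)\;\geq\; 0.
\]
Repeating the same argument with $z^0$ replaced by $z^0+\zeta_0 v$ for arbitrary $\zeta_0$ with $f(\zeta_0)>-\infty$ shows that this nonnegativity of the $p(t)$-Laplacian holds on all of $\{f>-\infty\}$; Theorem~\ref{ptBP} (the $p(t)$-Blaschke--Privalov theorem of Appendix~\ref{appendixA}) then implies that $f$ is subharmonic on its disc, completing the proof.

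The crux of the argument is the inequality $\varlimsup_{r\to 0}M_u\leq \widetilde{M}_f$: one must check that the thin ellipsoidal slicing produces the \emph{specific} radial weight $(1-t^2)^{n-1}$ on the complex line, and that reverse Fatou applies to the outer $\xi_1$-integral (which it does, since $u$ is bounded above on compact sets). A subsidiary point is verifying that $p(t)=n(1-t^2)^{n-1}$ belongs to the class of weights covered by Theorem~\ref{ptBP}; if the appendix states the $p(t)$-Blaschke--Privalov theorem for a sufficiently general admissible class, no further work is needed.
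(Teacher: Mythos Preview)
Your proposal is correct and follows essentially the same route as the paper's own proof: fix a complex line via a unitary rotation, use Fubini together with upper semi-continuity (reverse Fatou, justified by the local upper bound on $u$) to bound $\varlimsup_{r\to 0} M_u(z^0,T,E(R,r))$ by the weighted disc mean $\widetilde{M}_f(0,R)$ of the restriction, and then invoke Theorem~\ref{ptBP}. One small correction on your subsidiary point: in the appendix's normalization $n_u^p(x^0,r)=\int_0^1 p(t)\,m_u(x^0,rt)\,dt$ with \emph{sphere} means $m_u$, your $\widetilde{M}_f$ corresponds to the weight $p(t)=2nt(1-t^2)^{n-1}$ (not $n(1-t^2)^{n-1}$), which is nonnegative with $\int_0^1 p(t)\,dt=1$, so Theorem~\ref{ptBP} indeed applies with no further work.
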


The paper is organised as follows. The main theorems \ref{intromeanvaluecriterion} and \ref{introBPpsh} we prove in sections \ref{integralmcriterion} and \ref{secBlashkePrivalov} respectively. In Appendix \ref{appendixA} we introduce the  notion of $p(t)$-subharmonic functions for some weight function $p(t)$ which is used to prove Theorem \ref{intromeanvaluecriterion} and Theorem \ref{introBPpsh}. In general we show that if  $p(t)\ge 0$ then this notion is equivalent to subharmonic functions.

\subsection*{Acknowledgments} We would like to express our deep gratitude to professor Azimbay Sadullaev for introducing us this theme and for useful advice during the work. The first author is currently supported by the Programme Investissement d’Avenir (I-SITE ULNE /ANR-16-IDEX-0004 ULNE and LabEx CEMPI /ANR-11-LABX-0007-01) managed by the Agence Nationale de la Recherche.

\section{Integral criterion for $psh$ functions}\label{integralmcriterion}
Let us recall the definition of $psh$ functions.
\begin{definition}
Let $ D\subset \mathbb {C}^{n}$ be a domain. An upper semi-continuous function $u:D\rightarrow [-\infty,\infty),$ is called \textit{plurisubharmonic} in $ D $ (shortly $ u(z) \in psh(D) $) if for any complex line $ l $ the function $ u|_{l} $ is subharmonic in $ l \cap D $.
\end{definition}

Now we show the construction of a new integral criterion for $psh$ functions. Let us state the first main proposition of this section  
\begin{proposition}\label{mshtomeanvalue}
 Let $D\subset \mathbb{C}^n$ be a domain and $u$ be a $psh$ function on $D$. Then for any $z^0\in D$ and for any  unitary matrix $T$  with  $ z^0+T\circ E(r_1,\ldots ,r_n)\subset D$ the following inequality holds
\begin{equation}\label{pshr1r2rn}
      u(z^0) \leq M_u(z^0, T, E(r_1,\ldots ,r_n)).
\end{equation}
\end{proposition}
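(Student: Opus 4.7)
The plan is to reduce the ellipsoidal mean to the classical ball mean by two successive changes of variable, exploiting the invariance/compatibility of the $psh$ property with unitary maps and with holomorphic rescalings.

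First I would remove the translation and the unitary rotation. By translation invariance we may assume $z^0=0$. For a fixed unitary $T$, put $\tilde u(\eta):=u(T\eta)$; since $T$ is $\mathbb{C}$-linear (hence holomorphic), $\tilde u$ is $psh$ wherever $u$ is, and the change of variable $\xi=T\eta$ has real Jacobian $1$, so
$$\int_{T\circ E(r_1,\ldots,r_n)} u(\xi)\,dV(\xi)=\int_{E(r_1,\ldots,r_n)}\tilde u(\eta)\,dV(\eta).$$
This reduces the claim to proving $\tilde u(0)\le M_{\tilde u}(0,I,E(r_1,\ldots,r_n))$, i.e.\ the case $T=I$.

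Second, I would perform the holomorphic dilation $w_j=\eta_j/r_j$, which is a biholomorphism sending the unit ball $B(0,1)\subset\mathbb{C}^n$ onto $E(r_1,\ldots,r_n)$ with real Jacobian $r_1^2\cdots r_n^2$. Setting $v(w):=\tilde u(r_1w_1,\ldots,r_nw_n)$, the map $w\mapsto(r_1w_1,\ldots,r_nw_n)$ is holomorphic, so $v$ is $psh$ on a neighborhood of $\overline{B(0,1)}$. Using $V(r_1,\ldots,r_n)=r_1^2\cdots r_n^2\cdot\pi^n/n!$ and changing variables,
$$M_{\tilde u}(0,I,E(r_1,\ldots,r_n))=\frac{n!}{\pi^n}\int_{B(0,1)}v(w)\,dV(w),$$
which is exactly the mean of $v$ over the unit ball.

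To finish, I would invoke the standard mean-value inequality over Euclidean balls for $psh$ functions: any $psh$ function is subharmonic on $\mathbb{C}^n\cong\mathbb{R}^{2n}$, and subharmonicity yields
$$v(0)\le \frac{n!}{\pi^n}\int_{B(0,1)} v(w)\,dV(w).$$
Since $v(0)=\tilde u(0)=u(0)$, chaining these equalities and inequalities gives $u(z^0)\le M_u(z^0,T,E(r_1,\ldots,r_n))$, as required.

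There is no real obstacle here; the only point that needs care is justifying that composing a $psh$ function with the holomorphic diagonal rescaling $w\mapsto(r_1w_1,\ldots,r_nw_n)$ again produces a $psh$ function — but this is the standard functorial property of $psh$ under holomorphic maps. The rest is a bookkeeping computation of Jacobians and volumes, followed by the classical ball-mean inequality for subharmonic/$psh$ functions.
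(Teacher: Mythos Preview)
Your argument is correct, but it is genuinely different from the paper's. The paper first establishes a separate monotonicity lemma (Lemma~\ref{monotone}): using Fubini and the subharmonicity of $u$ in each coordinate $z_j$, it shows that $M_u(z^0,T,E(r_1,\ldots,r_n))$ is non-decreasing in every $r_j$. The proposition then follows by taking $r=\min\{r_1,\ldots,r_n\}$ and sandwiching $u(z^0)\le M_u(z^0,T,E(r,\ldots,r))\le M_u(z^0,T,E(r_1,\ldots,r_n))$, the first inequality being the ordinary ball mean. Your route bypasses monotonicity entirely: the diagonal holomorphic dilation $w\mapsto (r_1w_1,\ldots,r_nw_n)$ pulls the ellipsoidal mean of $\tilde u$ back to the unit-ball mean of $v=\tilde u\circ(\text{dilation})$, and since $psh$ is preserved under holomorphic maps, $v$ is $psh$ (hence subharmonic) and the ball-mean inequality applies directly. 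Your proof is shorter and needs only the functoriality of $psh$ under holomorphic maps plus a Jacobian computation; the paper's approach, on the other hand, isolates the monotonicity of $r_j\mapsto M_u(z^0,T,E(r_1,\ldots,r_n))$ as a result of independent interest (and records the special case for $E(R,r)$ as a corollary), which your argument does not yield.
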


Let us first prove the following lemma which is used in the proof of the proposition above. 

\begin{lemma}\label{monotone}
  Let $u$ be a  $psh$ on $D$. Then  $M_u(z^0, T, E(r_1,\ldots,r_n))$ is non-decreasing by $r_j$ for any $1 \le j \le n $, i.e. for $r_{j}' \le r_{j}''$ we have $$ M_u(z^0, T, E(r_1,\ldots,r_{j-1},r_j',r_{j+1},\ldots r_n))\le M_u(z^0, T, E(r_1,\ldots,r_j'',\ldots r_n)). $$
\end{lemma}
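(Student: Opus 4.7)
The plan is to reduce to the case $T = I$, $z^0 = 0$, and $j = 1$, then translate the integral to the unit ball and apply Fubini together with the classical monotonicity of disc means for subharmonic functions. For the first reduction, set $v(w) = u(z^0 + Tw)$. Since $T$ is unitary, the pullback under $w \mapsto z^0 + Tw$ preserves volume and preserves plurisubharmonicity, so it identifies $M_u(z^0, T, E(r_1,\ldots,r_n))$ with $M_v(0, I, E(r_1,\ldots,r_n))$. For the reduction to $j=1$, note that coordinate permutations of $\mathbb{C}^n$ permute the parameters $r_j$, map $E$-type ellipsoids to $E$-type ellipsoids, and preserve the $psh$ class, so the general case for $r_j$ follows from the case of $r_1$.

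Next I would change variables via $z_j = r_j w_j$, which sends $E(r_1,\ldots,r_n)$ to the unit ball $B \subset \mathbb{C}^n$ with Jacobian $r_1^2\cdots r_n^2$; this Jacobian cancels exactly against $V(r_1,\ldots,r_n)$, giving
$$M_v(0, I, E(r_1,\ldots,r_n)) = \frac{n!}{\pi^n} \int_B v(r_1 w_1, r_2 w_2, \ldots, r_n w_n)\, dV(w).$$
Applying Fubini in $w_1$ versus $w' = (w_2,\ldots,w_n)$ and substituting $\zeta = r_1 w_1$ in the inner disc integral, the inner integral becomes
$$\frac{1}{r_1^2}\int_{|\zeta|< r_1\sqrt{1-|w'|^2}} v(\zeta, r_2 w_2, \ldots, r_n w_n)\, dA(\zeta) = \pi(1-|w'|^2)\, h_{w'}\bigl(r_1\sqrt{1-|w'|^2}\,\bigr),$$
where $h_{w'}(R)$ denotes the disc mean of $\zeta \mapsto v(\zeta, r_2 w_2, \ldots, r_n w_n)$ over $\{|\zeta|<R\}$.

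Now the key step: because $v$ is $psh$, its restriction to the complex affine line $\{(\zeta, r_2 w_2, \ldots, r_n w_n) : \zeta \in \mathbb{C}\}$ is subharmonic in $\zeta$, so by the classical monotonicity of disc means $R \mapsto h_{w'}(R)$ is non-decreasing. For fixed $w'$, the radius $r_1\sqrt{1-|w'|^2}$ is non-decreasing in $r_1$, so the inner integral is pointwise non-decreasing in $r_1$, and integrating in $w'$ preserves this monotonicity.

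I do not expect a major obstacle; the only point requiring attention is the validity of Fubini, which is ensured by the local $L^1$-integrability of $psh$ functions on compact subsets of $D$ (if $v \equiv -\infty$ on a component, the inequality is trivial). The rest is a change-of-variables bookkeeping anchored on the standard radial monotonicity of disc means for subharmonic functions.
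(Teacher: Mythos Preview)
Your proof is correct and follows essentially the same route as the paper: reduce to $T=\mathrm{Id}$, $z^0=0$, $j=1$, apply Fubini to isolate the $z_1$-integral as a disc mean, invoke the radial monotonicity of disc means for the subharmonic slice $\zeta\mapsto v(\zeta,\,\cdot\,)$, and integrate against the nonnegative weight $1-|w'|^2$. The only cosmetic difference is that you first rescale to the unit ball before Fubini, whereas the paper applies Fubini directly on the ellipsoid; your explicit remark on the $L^1_{\mathrm{loc}}$ justification of Fubini is a small plus.
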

\begin{proof}
Without loss of generality we can assume $T=\mathrm{Id}$ and we prove the assertion for $r_1$. Note that, since $u$ is $psh$ it is subharmonic by $z_1$ for any fixed $'z:=(z_2,\ldots z_n)$. First of all, let us make the following denotions:
$$E_{n-1}('r)=\left\{\frac{|z_2|^2}{r_2^2} + \ldots+\frac{|z_{n}|^2}{r_n^2} \le 1\right\},$$  $$'E(r_1)=\left\{ |z_1|^2  \leq r_1^2 \Big( 1 - \frac{|z_2|^2}{r_2^2} - \ldots -\frac{|z_{n}|^2}{r_n^2}\Big)\right\}$$
and finally
$$v(r_1,'z)=\frac{1}{V'(r_1)}\int\limits_{'E(r_1)} u(z_1,'z)dV (z_1), $$
where $$V'(r_1):=\pi r_1^2 \left( 1 - \frac{|z_2|^2}{r_2^2} -\ldots -\frac{|z_{n}|^2}{r_n^2}\right)$$ is the area of $'E(r_1)$.
After using Fubini's theorem, we get:
$$
 M_u(z^0, Id, E(r_1,\ldots,r_n)) =$$ $$=\frac{1}{V(r_1,\ldots,r_n)}\int\limits_{E_{n-1}('r)}V'(r_1)dV('z) \frac{1}{V'(r_1)}\int\limits_{'E(r_1)} u(z_1,'z)dV (z_1) =
$$
$$  = \frac {n!} {\pi^{n-1}  r_2^2 \cdots r_n^2} \int\limits_{E_{n-1}('r)} {\left( 1 - \frac{|z_2|^2}{r_2^2} - \ldots- \frac{|z_{n}|^2}{r_n^2}\right)}v(r_1,'z)dV('z).$$
Since $u$ is subharmonic by $z_1$ the integral  $v(r_1,'z)$
is increasing  by $r_1$ i.e. $ v(r_1^*,'z) \le v(r_1^{**},'z)$ for all $ r_1^* \le r_1^{**} $.  Since $1 - \frac{|z_2|^2}{r_2^2} - \ldots- \frac{|z_{n}|^2}{r_n^2}$ is non-negative on $E_{n-1}('r)$ and $v(r_1,'z)$ is non-decreasing by $r_1$  the following integral
$$\frac {n!} {\pi^{n-1} r_2^2 \cdots r_n^2} \int\limits_{E_{n-1}('r)} {\left( 1 - \frac{|z_2|^2}{r_2^2} -\ldots -\frac{|z_{n}|^2}{r_n^2}\right)v(r_1,'z)}dV('z)$$ 
is non-decreasing by $r_1$. Consequently, $M_u(z^0, Id, E(r_1,\ldots,r_n))$ is non-decreasing by $r_1$. In this manner the proof could be given for any $ r_j $ where $ 1 \le j \le n $.
\end{proof}

Similarly, we can easily get the following 

\begin{corollary}
  Let $u$ be a  $psh$  function on $D$  then for $ T\circ E(R,r))$ the mean value $M_u(z^0, T, E(R,r))$ is non-decreasing by both $R$ and $r$.
\end{corollary}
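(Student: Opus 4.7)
The plan is to deduce the corollary as an immediate specialization of Lemma \ref{monotone} to the ellipsoid $E(R,r) = E(R,r,r,\ldots,r)$, where the radii in the last $n-1$ coordinates are tied together.

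First I would handle monotonicity in $R$: this is simply Lemma \ref{monotone} applied with $j=1$, holding $r_2 = \cdots = r_n = r$ fixed. There is nothing to add.

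For monotonicity in $r$, the lemma only gives monotonicity in one coordinate at a time, so I would argue by a telescoping chain. Given $r' \le r''$, I would write
\begin{align*}
M_u(z^0, T, E(R,r',r',\ldots,r')) &\le M_u(z^0, T, E(R,r'',r',\ldots,r')) \\
&\le M_u(z^0, T, E(R,r'',r'',r',\ldots,r')) \\
&\le \cdots \\
&\le M_u(z^0, T, E(R,r'',r'',\ldots,r'')),
\end{align*}
where each successive inequality follows from Lemma \ref{monotone} applied to the appropriate index $j \in \{2,\ldots,n\}$ (with all other radii held fixed at their current values).

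There is no real obstacle: the only subtlety is noticing that the lemma is stated for variation of a single radius, so increasing the common value $r$ of the last $n-1$ radii must be done coordinate by coordinate. Joint monotonicity in $(R,r)$ then follows by combining the two cases.
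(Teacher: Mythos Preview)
Your proposal is correct and matches the paper's intended approach: the corollary is stated immediately after Lemma~\ref{monotone} with only the remark ``Similarly, we can easily get the following,'' so it is meant as a direct specialization to $E(R,r)=E(R,r,\ldots,r)$. Your telescoping chain for the $r$-variable is exactly the detail the paper leaves implicit.
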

Now we are ready to prove the Proposition \ref{mshtomeanvalue}.
\begin{proof}[{Proof of Proposition \ref{mshtomeanvalue}}] Assume $u$ is a  $psh$  function on $D$. Since $u(T^{-1}\circ z)$ is also $psh$ on $T\circ D$ it is enough to prove the assertion for $T=\mathrm{Id}$. First assume $r_1=\ldots=r_n=r.$ Then $M_u(z^0, \mathrm{Id}, E(r_1,\ldots ,r_n))$ is just the integral mean by $B(z^0,r)$. Hence, in this case \eqref{pshr1r2rn} is true because $u$ is also a subharmonic function.
Take now any vector $(r_1,\ldots,r_n)$ with $E(r_1,\ldots ,r_n)\subset D$. By Lemma \ref{monotone} the integral mean $M_u(z^0, T, E(r_1,\ldots ,r_n))$ is monotonically increasing by $r_j$ for any $1\le j\le n$.   Let $r:=\min\{r_1,\ldots ,r_n\}$. Since $M_u(z^0, T, E(r_1,\ldots ,r_n))$ is monotonically non-decreasing  we have $M_u(z^0, T, E(r,\ldots ,r))\le M_u(z^0, T, E(r_1,\ldots ,r_n))$. Consequently,
 $$ u(z^0)\le M_u(z^0, T, E(r,\ldots ,r))\le M_u(z^0, T, E(r_1,\ldots ,r_n)).$$
We are done. \end{proof}
Now we give the next main proposition of this section. It is the converse of the Proposition \ref{mshtomeanvalue} but in strong sense in the term of $E(R,r)$ ellipsoids.
\begin{proposition}\label{meanvaluetopsh}
Let $ D \subset \mathbb{C}^n $ be a domain and $u$ be an upper semi-continuous function on $ D $. If for any $z^0\in D$ and for any unitary matrix $ T $ the following
\begin{equation}\label{meanvaluemsh}
    \exists r_0>0 : u(z^0) \leq M_u(z^0, T, E(R,r)), \forall R,r : \max\{R, r\}\le r_0, T\circ E(R,r) \subset D
\end{equation}
is true then $ u(z) \in psh(D) $.
\end{proposition}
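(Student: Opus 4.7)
To conclude $u\in\mathrm{psh}(D)$, it suffices to prove that $u$ restricted to any complex line meeting $D$ is subharmonic. Fix such a line $l$ and a point $z^0\in l\cap D$. After translating so that $z^0=0$ and choosing a unitary matrix $T$ mapping the $z_1$-axis onto $l$, applying the hypothesis to this $T$ reduces the question to showing that $\zeta\mapsto u(\zeta,0,\dots,0)$ is subharmonic in some disc around $0$. I will show it is $p(t)$-subharmonic with a nonnegative weight, and then invoke Theorem \ref{ptBP} from Appendix \ref{appendixA}.

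\textbf{Slicing and the limit $r\to 0$.} For $(R,r)$ with $\max\{R,r\}\le r_0$ and $E(R,r)\subset D$, slice $E(R,r)$ at fixed $z_1\in\{|z_1|\le R\}$; the section in $\mathbb{C}^{n-1}$ is the ball of radius $\rho(z_1):=r\sqrt{1-|z_1|^2/R^2}$. A direct Fubini computation using $V(R,r)=\pi^n R^2 r^{2(n-1)}/n!$ yields
$$M_u(0,\mathrm{Id},E(R,r)) \;=\; \frac{n}{\pi R^2}\int_{|z_1|\le R} \bigl(1-|z_1|^2/R^2\bigr)^{n-1}\, A_r(z_1)\, dV(z_1),$$
where $A_r(z_1)$ denotes the average of $w\mapsto u(z_1,w)$ over $\{|w|\le\rho(z_1)\}\subset\mathbb{C}^{n-1}$. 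Since $u$ is upper semi-continuous, for every $z_1$ one has $\limsup_{r\to 0}A_r(z_1)\le u(z_1,0)$. Because $u$ is bounded above on a neighborhood of the compact set $\overline{E(R,r_0)}$, the reverse Fatou lemma gives
$$u(0)\;\le\;\limsup_{r\to 0} M_u(0,\mathrm{Id},E(R,r))\;\le\;\frac{n}{\pi R^2}\int_{|z_1|\le R}\bigl(1-|z_1|^2/R^2\bigr)^{n-1}\, u(z_1,0)\, dV(z_1).$$
Passing to polar coordinates $z_1=\rho e^{i\theta}$ and substituting $t=\rho^2/R^2$, the right-hand side becomes $n\int_0^1 (1-t)^{n-1}\bar u(R\sqrt t)\, dt$, where $\bar u(\rho)$ is the circular mean of $\zeta\mapsto u(\zeta,0)$ on $|\zeta|=\rho$.

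\textbf{Conclusion.} This is precisely the mean-value inequality defining a $p(t)$-subharmonic function with weight $p(t)=n(1-t)^{n-1}\ge 0$ (which also integrates to $1$). By applying exactly the same argument with the base point translated to an arbitrary point of a small disc in the $z_1$-plane (the hypothesis of the proposition holds at every point of $D$), the function $\zeta\mapsto u(\zeta,0)$ satisfies the $p(t)$-subharmonic inequality at every point for all sufficiently small radii. Theorem \ref{ptBP} of Appendix \ref{appendixA}, applied to the nonnegative weight $p$, therefore ensures $\zeta\mapsto u(\zeta,0)$ is subharmonic. Since $z^0\in D$ and the unitary $T$ were arbitrary, $u|_l$ is subharmonic on every complex line, so $u\in\mathrm{psh}(D)$.

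\textbf{Expected main obstacle.} The delicate step is the interchange of $\limsup$ and integration as $r\to 0$, which rests on the reverse Fatou lemma; this needs uniform domination of $(1-|z_1|^2/R^2)^{n-1} A_r(z_1)$ by a fixed integrable function, which follows from local boundedness above of the usc function $u$ on a neighborhood of $\overline{E(R,r_0)}$, together with the observation that the inner balls $\{z_1\}\times B_{n-1}(0,\rho(z_1))$ remain inside $D$ once $R$ and $r_0$ are chosen small. The second mildly subtle point is recognizing the resulting weighted inequality as a $p(t)$-subharmonicity condition in the precise form needed by the appendix theorem; this should be a book-keeping check on the definitions given there.
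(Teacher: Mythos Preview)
Your argument follows the same overall architecture as the paper's proof: reduce to a fixed line via a unitary, slice $E(R,r)$ by Fubini, let $r\to 0$ to obtain a weighted mean-value inequality on the disc $\{|z_1|\le R\}$, and then appeal to the appendix to conclude subharmonicity on the line. The one genuine technical difference is in how the passage $r\to 0$ is justified: the paper approximates $u$ from above by continuous $u_j\downarrow u$, traps $E(R,r)$ inside the open set $\{u_j<u_j|_l+\varepsilon\}$ for small $r$, and uses monotone convergence; you instead invoke the reverse Fatou lemma with a uniform upper bound coming from local boundedness above of the usc function. Both are legitimate; your route is slightly more direct, while the paper's avoids any measurability/domination bookkeeping.

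Two small corrections are needed. First, after your substitution $t=\rho^2/R^2$ the expression $n\int_0^1(1-t)^{n-1}\bar u(R\sqrt t)\,dt$ is \emph{not} of the form $n_u^p(0,R)=\int_0^1 p(t)\,m_u(0,Rt)\,dt$ used in the appendix, because the circular mean appears at radius $R\sqrt t$ rather than $Rt$. A further change $s=\sqrt t$ gives the correct weight $p(s)=2n\,s(1-s^2)^{n-1}$, which is exactly what Corollary~\ref{cormeanvalue0} (with $k=1$, $l=n-1$) produces. Second, the appendix result you need is Lemma~\ref{frompsubtosub} (or equivalently Corollary~\ref{cormeanvalue0}), which says that for $p\ge 0$ the $p(t)$-subharmonic inequality is equivalent to subharmonicity; Theorem~\ref{ptBP} is the Blaschke--Privalov characterisation via $\overline{\bigtriangleup}_p u\ge 0$ and is not what your argument establishes. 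With these two adjustments your proof is complete and matches the paper's.
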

\begin{proof} We fix a point $ z^0 $, say $ z^0 = 0 $, and a line $l\ni 0$. It is not difficult to see that there exists an unitary matrix $T$ such that $T\circ l=\{z\in \mathbb{C}^n: z_{2 }= z_{3} = \ldots = z_n=0\} $, so that without loss of generality we can assume $ l=\{z\in \mathbb{C}^n: z_{2 }= z_{3} = \ldots = z_n=0\} $. We apply the formula \eqref{meanvaluemsh} for ellipsoid $E(R,r)$ and for $\max\{R,r\}$ small enough so that
$$ u(0) \leq \frac {n!} {\pi^n R^2 r^{2n-2}} \int\limits_{E(R,r)} u(z)dV(z), $$
and by setting $'z=(z_2,\ldots ,z_{n})$ and using Fubini's theorem we have
\begin{equation}\label{meanvalue2}
u(0) \leq \frac {n!} {\pi^n R^2 r^{2n-2}} \int\limits_{B_1(R)} dV(z_1) \int\limits_{'E} u(z)dV('z),
\end{equation}
where  $B_1(R)=\{|z_1| \le R\}$ and $$'E=\left\{\frac {|z_2|^2+\ldots+|z_n|^2} {r^2} \leq 1 - \frac {|z_1|^2} {R^2}\right\}.$$

Now we evaluate the right side of \eqref{meanvalue2}. Since the function $ u(z) $ is upper semi-continuous, there exists a monotonically decreasing sequence of continuous functions $ u_j(z) $ such that $ u_j(z) \downarrow u(z) $. Now we fix $ j \in \mathbb N$ and $ \varepsilon > 0 $. Take an open set $$ O_j^\varepsilon(z) = \{ u_j(z) < u_j(z_1,0,\ldots,0) + \varepsilon \}. $$ Then it is easy to see that $ O_j^\varepsilon \supset l\cap D $. Fix any $R<r_0$ with $B_1(R)\subset l\cap D$. Then for any $\varepsilon>0$ there exists $r$ such that $E(R,r)\subset O_j^\varepsilon$. Hence for such $r$s we get:
\begin{align*}
u(0) \le &\frac {n!} {\pi^n R^2 r^{2n-2}} \int\limits_{B_1(R)} dV(z_1) \int\limits_{'E} u(z)dV('z) \le \\
\le   & \frac {n!} {\pi^n R^2 r^{2n-2}} \int\limits_{B_1(R)} dV(z_1) \int\limits_{'E} u_j(z)dV('z) < \\
   < & \frac {n!} {\pi^n R^2 r^{2n-2}} \int\limits_{B_1(R)} dV(z_1) \int\limits_{'E} (u_j(z_1,0,\ldots,0) + \varepsilon)dV('z) = \\
  =  & \frac {n!} {\pi^n R^2 r^{2n-2}} \int\limits_{B_1(R)}u_j(z_1,0,\ldots,0) dV(z_1) \int\limits_{'E} dV('z)+\varepsilon = \\
   = & \frac {n} {\pi R^2} \int\limits_{B_1(R)}\left(1 - \frac {|z_1|^2} {R^2}\right)^{n-1}u_j(z_1,0,\ldots,0) dV(z_1) +\varepsilon 
\end{align*}
By letting $j\to \infty$ and using the monotone converges theorem of Beppo Levi then tending $\varepsilon$ to $ 0 $  we get the following inequality:
$$u(0)\le \frac {n} {\pi R^2 } \int\limits_{|z_1| \le R}\left(1 - \frac {|z_1|^2} {R^2}\right)^{n-1}u(z_1,0,\ldots,0) dV(z_1), \mbox{ for any } R<r_0.$$
We can prove similar inequality at arbitrary point $z^0\in l\cap D$. Consequently, by Corollary \ref{cormeanvalue0} we can see that  $ u(z) $ is a subharmonic function on $ l\cap D $. Hence $ u(z) $ is a  $psh$  function on $D.$
\end{proof}
Finally, we have a revised version of  Theorem \ref{intromeanvaluecriterion}
\begin{theorem}\label{meanvaluecriterion}
Let $D\subset \mathbb{C}^n$ be a domain and $u$ be an upper semi-continuous function on $D$. Then the following properties are equivalent:
\begin{enumerate}
    \item[a)] $u$ is $psh$ on $D$;
    \item[b)] for any $z^0\in D$ and any  unitary matrix $T$  with  $ z^0+T\circ E(r_1,\ldots ,r_n)\subset D$ the following inequality holds
$$  u(z^0) \leq M_u(z^0, T, E(r_1,\ldots ,r_n));$$
\item[c)] for any $z^0\in D$ and any   unitary matrix $T$ there exists a small $r_0$ such that for any tuple $(r_1,\ldots ,r_n)$ with $\max\{r_1,\ldots ,r_n\}\le r_0$ the following inequality holds
$$  u(z^0) \leq M_u(z^0, T, E(r_1,\ldots ,r_n));$$
    \item[d)] for any $z^0\in D$ and any  unitary matrix $T$ there exists $r_0$ small enough such that for any $(R,r)$ with $\max\{R, r\}\le r_0$ the following inequality holds
$$
  u(z^0) \leq M_u(z^0, T, E(R,r));
$$
    \item[e)]
    for any $z^0\in D$ and any unitary matrix $T$  with  $ z^0+T\circ E(R,r)\subset D$ the following inequality holds
$$  u(z^0) \leq M_u(z^0, T, E(R,r)).$$

\end{enumerate}

 \end{theorem}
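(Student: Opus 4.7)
The heavy lifting for Theorem \ref{meanvaluecriterion} has already been carried out: Proposition \ref{mshtomeanvalue} is precisely the implication a) $\Rightarrow$ b), and Proposition \ref{meanvaluetopsh} is precisely d) $\Rightarrow$ a). My plan is therefore to close the equivalence of all five conditions by establishing the cycle a) $\Rightarrow$ b) $\Rightarrow$ c) $\Rightarrow$ d) $\Rightarrow$ a), together with the side implications b) $\Rightarrow$ e) $\Rightarrow$ d). Each of the remaining arrows is either a specialization of the parameters or a consequence of the openness of $D$.

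For b) $\Rightarrow$ c), I would fix $z^0\in D$ and a unitary $T$, use openness of $D$ to pick $r_0>0$ small enough that $B(z^0,\sqrt{n}\,r_0)\subset D$, and then observe that whenever $\max\{r_1,\ldots,r_n\}\le r_0$ one has $E(r_1,\ldots,r_n)\subset B(0,\sqrt{n}\,r_0)$ (because $|z|^2\le r_1^2+\cdots+r_n^2\le n r_0^2$ on the ellipsoid), a property that is preserved under the unitary transformation $T$. Thus $z^0+T\circ E(r_1,\ldots,r_n)\subset D$, and the inequality furnished by b) is exactly the one required by c). The same openness argument gives e) $\Rightarrow$ d): any sufficiently small $r_0$ forces $z^0+T\circ E(R,r)\subset D$ whenever $\max\{R,r\}\le r_0$, and e) then supplies the inequality demanded by d).

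The two specialization steps are even shorter. For b) $\Rightarrow$ e), I would simply substitute the tuple $(R,r,\ldots,r)$ in b), recalling the convention $E(R,r):=E(R,r,\ldots,r)$. For c) $\Rightarrow$ d), I would take the same tuple $r_1=R$, $r_2=\cdots=r_n=r$ in c), using that $\max\{r_1,\ldots,r_n\}=\max\{R,r\}$, so the $r_0$ supplied by c) also works for d). The cycle is then closed by d) $\Rightarrow$ a), which is Proposition \ref{meanvaluetopsh}.

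There is no substantive obstacle remaining: the geometric and analytic content (monotonicity of the mean along ellipsoids, together with the Fubini and Beppo Levi arguments in the converse direction) is already packaged into the two propositions. Writing out Theorem \ref{meanvaluecriterion} is a bookkeeping matter that exploits only the inclusion of ellipsoid families $E(R,r)\subset\{E(r_1,\ldots,r_n)\}$ and the openness of $D$; if anything, the single point that deserves care is the choice of $r_0$ in b) $\Rightarrow$ c) and e) $\Rightarrow$ d), where one must verify that the small ellipsoids indeed fit inside $D$ after the unitary transformation $T$, which is transparent because $T$ preserves the Euclidean norm.
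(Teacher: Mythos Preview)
Your proposal is correct and follows exactly the same route as the paper: the paper derives $a)\Rightarrow b)$ from Proposition~\ref{mshtomeanvalue}, declares the chains $b)\Rightarrow c)\Rightarrow d)$ and $b)\Rightarrow e)\Rightarrow d)$ to be obvious, and closes with $d)\Rightarrow a)$ via Proposition~\ref{meanvaluetopsh}. Your write-up simply fills in the ``obvious'' steps (specialization to $E(R,r)$ and the openness-of-$D$ argument for the existence of $r_0$), so nothing further is needed; as a minor remark, since $r_j\le r_0$ implies $E(r_1,\ldots,r_n)\subset B(0,r_0)$ directly, the factor $\sqrt{n}$ in your containment is unnecessary, though harmless.
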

 
\begin{proof}
 By Proposition \ref{mshtomeanvalue} we have $a)\implies b)$. The implications $b)\implies c)\implies d)$ and  $b)\implies e)\implies d)$  are obvious. By Proposition \ref{meanvaluetopsh} we have $d)\implies a)$.
\end{proof}

 \section{Analogue of Blaschke-Privalov's theorem for $psh$ functions}\label{secBlashkePrivalov}

Before stating the main theorem of this section we remind the upper generalised Laplace operator and we recall the classical Blaschke-Privalov's theorem. Let $ u$  be an upper semi-continuous function in the domain $ D \subset \mathbb {R}^{n}$ and $u(x)\not\equiv -\infty$. For a point $ x^0 \in D \setminus u_{-\infty}$, where $u_{-\infty}:=\{x\in D:u(x)=-\infty\},$ we define the \emph{upper} $ \overline{\bigtriangleup} u(x^0) $ \emph{generalised Laplace operator} of the function $ u $ at the point $ x^0 $, constructed by the mean of spheres (or balls) with the following equality:
\begin{equation}\label{sphere}
\overline{\bigtriangleup} u(x^0) :=2n \cdot \varlimsup_{r\rightarrow +0} \frac { m_u (x^0,r)-u(x^0)} {r^2}.
\end{equation}
where $m_u (x^0,r)$ is the integral mean of $u$ on the sphere $S(x^0,r)$ (or ball $ B(x^0,r) $, for more information see Appendix \ref{appendixA}).
 
 \begin{theorem}[{Blaschke-Privalov}]\label{B-P} An upper semi-continuous in the domain $D \subset \mathbb {R}^{n}$ function $u(x)$ with $u(x)\not\equiv -\infty,$  is subharmonic  if and only if
 \begin{equation}\label{upperD}
      \overline{\bigtriangleup} u(x)\geq 0  \mbox{ for all } x^0\in D\setminus u_{-\infty}.
 \end{equation}

 \end{theorem}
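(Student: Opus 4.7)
The plan is to prove the two directions separately, with essentially all the work going into sufficiency. For necessity, I would observe that if $u$ is subharmonic then $m_u(x^0, r) \geq u(x^0)$ for every sufficiently small $r$, so the difference quotient in \eqref{sphere} is pointwise non-negative and hence so is its upper limit; no further argument is needed.

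For sufficiency the key is to recover the maximum principle against harmonic majorants. Fix a closed ball $\overline{B} = \overline{B}(c, r_0) \subset D$ and a function $h$ that is harmonic in a neighborhood of $\overline{B}$ and satisfies $u \leq h$ on $\partial B$; it suffices to show $u \leq h$ throughout $B$. I would set $v = u - h$, which is upper semi-continuous on $\overline{B}$ and non-positive on $\partial B$, and argue by contradiction, assuming $\sup_B v > 0$.

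The standard trick is to introduce, for sufficiently small $\varepsilon > 0$, the quadratic perturbation
$$
v_\varepsilon(x) := v(x) + \varepsilon \bigl(\|x - c\|^2 - r_0^2\bigr),
$$
which still vanishes on $\partial B$, keeps a strictly positive supremum, and by upper semi-continuity attains its maximum at some interior point $x^* \in B$. Since $v_\varepsilon(x^*) > 0$ and the correction is non-positive inside $B$, one has $v(x^*) > 0$, so $u(x^*) > h(x^*) > -\infty$ and $x^* \in D \setminus u_{-\infty}$, putting us in the regime where the hypothesis applies. Because $h$ is smooth harmonic and $\|x - c\|^2$ has classical Laplacian $2n$, a direct computation with the mean-value formula gives $\overline{\bigtriangleup} v_\varepsilon(x^*) = \overline{\bigtriangleup} u(x^*) + 2n\varepsilon \geq 2n\varepsilon > 0$. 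On the other hand, at an interior maximum of $v_\varepsilon$ every small sphere (or ball) mean satisfies $m_{v_\varepsilon}(x^*, r) \leq v_\varepsilon(x^*)$, which forces $\overline{\bigtriangleup} v_\varepsilon(x^*) \leq 0$. The contradiction concludes the proof.

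The main obstacle I anticipate is the additivity step $\overline{\bigtriangleup}(v + \varphi) = \overline{\bigtriangleup} v + \bigtriangleup \varphi$ for smooth $\varphi$: since $\overline{\bigtriangleup}$ is defined as a limsup it is only subadditive in general, but additivity is restored as soon as one summand admits an ordinary Laplacian, because its mean-value quotient then converges and can be pulled out of the limsup. Verifying this carefully, and matching the normalization constant $2n$ to the sphere-versus-ball convention mentioned in \eqref{sphere}, is the step that needs the most care.
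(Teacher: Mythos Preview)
Your proposal is correct and follows the classical maximum-principle argument. Note, however, that the paper does not actually prove Theorem~\ref{B-P}: it is merely recalled as a known result (with references to \cite{B16}, \cite{B69}, \cite{P25}, \cite{P41}, \cite{P412}, \cite{SSh21}). The closest thing to a proof in the paper is the proof of the generalized $p(t)$-version, Theorem~\ref{ptBP}, which the authors explicitly say ``is similar to the proof of classical Blaschke-Privalov theorem.'' That proof uses exactly your strategy: compare $u$ to a harmonic majorant (they take the Poisson integral $P[\varphi]$ of a continuous boundary majorant $\varphi$ rather than an abstract harmonic $h$, which is the same thing), add the quadratic barrier $\varepsilon(\|x-x^0\|^2-r^2)$, locate an interior positive maximum of the perturbed difference, and derive the contradiction $0\ge \overline{\bigtriangleup}_p v(x)=2n\varepsilon+\overline{\bigtriangleup}_p u(x)>0$. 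Your anticipated ``main obstacle''---additivity of the $\limsup$-Laplacian when one summand is $C^2$---is precisely what the paper handles via the lemma showing $\overline{\bigtriangleup}_p=\bigtriangleup$ on $C^2$ functions; your plan to pull the convergent quotient out of the $\limsup$ is the right justification.
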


 \subsection{Blaschke-Privalov theorem for $psh$ functions}
 Now we define 
 $$ \overline{D}_T u(z^0)=\varlimsup_{R\to 0}\varlimsup_{r\to 0}\frac{M_u(z^0, T, E(R,r))-u(z^0)}{R^2} $$
 and let
 $$ \overline{D} u(z^0)= \inf_{T}\overline{D}_T u(z^0)$$
 where the infimum is taken all over the unitary matrices $T$.
  \begin{theorem}\label{analogBP} An upper semi-continuous function $u$ in the domain $D \subset \mathbb {C}^{n}$ with $u(z)\not\equiv -\infty$,  is  $psh$   if and only if
$$ \overline{D} u(z)\geq 0  \mbox{ for all }  z^0\in D\setminus u_{-\infty}. $$
 \end{theorem}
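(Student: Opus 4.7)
The plan is to split the theorem into its two implications and handle the easy direction with Theorem \ref{meanvaluecriterion}, then reduce the hard direction to the weighted one‑dimensional Blaschke--Privalov statement provided by Theorem \ref{ptBP} in Appendix \ref{appendixA}.

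\textbf{Forward direction (psh $\Rightarrow \overline{D}u\ge 0$).} Suppose $u\in psh(D)$. By Theorem \ref{meanvaluecriterion}, for every $z^0\in D$, every unitary $T$ and all sufficiently small $R,r>0$ we have $u(z^0)\le M_u(z^0,T,E(R,r))$. In particular, at each $z^0\in D\setminus u_{-\infty}$ every quotient $(M_u(z^0,T,E(R,r))-u(z^0))/R^2$ is nonnegative, so $\overline{D}_T u(z^0)\ge 0$ for every unitary $T$, and taking the infimum over $T$ gives $\overline{D}u(z^0)\ge 0$.

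\textbf{Converse direction (reduction to one complex variable).} Fix a complex line $l$ through a point $z^0\in l\cap D$ with $u(z^0)>-\infty$. Choose a unitary matrix $T$ that sends the $z_1$-axis to the direction of $l-z^0$ and write $\tilde u(\zeta):=u(z^0+T(\zeta,0,\ldots,0))$, an upper semi‑continuous function of one complex variable. By hypothesis $\overline{D}_T u(z^0)\ge \overline{D}u(z^0)\ge 0$. The main computation is to identify the inner $r\to 0$ limit of the ellipsoidal mean: reusing the Fubini decomposition from the proof of Proposition \ref{meanvaluetopsh} (which slices $E(R,r)$ over $\{|z_1|\le R\}$ with transverse ellipses of radius $r\sqrt{1-|z_1|^2/R^2}$) for a monotone sequence of continuous majorants $u_j\downarrow u$ and then applying Beppo Levi, I would establish
$$
\varlimsup_{r\to 0} M_u(z^0,T,E(R,r))\ \le\ \frac{n}{\pi R^2}\int_{|\zeta|\le R}\Bigl(1-\frac{|\zeta|^2}{R^2}\Bigr)^{n-1}\tilde u(\zeta)\,dV(\zeta)\ =:\ m^p_{\tilde u}(0,R),
$$
where $p(t)=(1-t^2)^{n-1}$ is the radial weight on the unit disc.

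\textbf{Applying the weighted one‑variable Blaschke--Privalov theorem.} The displayed inequality, together with $\overline{D}_T u(z^0)\ge 0$, yields
$$
\varlimsup_{R\to 0}\frac{m^p_{\tilde u}(0,R)-\tilde u(0)}{R^2}\ \ge\ 0,
$$
which is precisely the hypothesis of Theorem \ref{ptBP} for the weight $p(t)=(1-t^2)^{n-1}$ on the disc $\{|\zeta|<R\}\subset l$. Since $p\ge 0$, that theorem asserts that $p(t)$-subharmonicity coincides with ordinary subharmonicity, so $\tilde u=u|_l$ is subharmonic on its connected component of $l\cap D$. As the complex line $l$ and the point $z^0$ were arbitrary, $u$ is plurisubharmonic on $D\setminus u_{-\infty}$, and standard removable‑singularity arguments for usc functions extend subharmonicity through $u_{-\infty}$.

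\textbf{Expected obstacle.} The delicate step is the inner limit $\varlimsup_{r\to 0}M_u(z^0,T,E(R,r))$: with only upper semi‑continuity one cannot pass the limit inside the integral, so the monotone continuous envelope argument of Proposition \ref{meanvaluetopsh} must be carried out while keeping uniform control as $r\to 0$, and one must check that the resulting $\varlimsup$ composes correctly with the outer $\varlimsup_{R\to 0}$ to match the hypothesis of Theorem \ref{ptBP}. Everything else is bookkeeping around the unitary change of variables and invoking the already established one‑variable weighted Blaschke--Privalov result.
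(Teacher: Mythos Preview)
Your proposal follows essentially the same architecture as the paper's proof: the easy direction via the mean-value criterion, and the hard direction by bounding $\varlimsup_{r\to 0}M_u(z^0,T,E(R,r))$ above by the weighted one-variable mean $n^p_{\tilde u}(0,R)$ and then invoking Theorem~\ref{ptBP}. The only substantive variation is in how you control the inner $r\to 0$ limit. You propose reusing the continuous-majorants-plus-Beppo-Levi device from Proposition~\ref{meanvaluetopsh}; the paper instead rescales the transverse variables to write the integrand over the fixed set $E(R,1)$ as $u(z_1,rz_2,\dots,rz_n)-u(z_1,0,\dots,0)$ and applies a reverse Fatou inequality (justified because $u$, being usc, is bounded above on compacts) together with upper semi-continuity to conclude directly that the $\varlimsup$ is $\le 0$. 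Both arguments work; the paper's is slightly cleaner since it avoids the auxiliary sequence $u_j$, while yours has the advantage of literally recycling an argument already written out.

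Two minor points to tighten. First, your stated weight $p(t)=(1-t^2)^{n-1}$ is the radial density before normalization; in the $n^p_u$ formalism of Appendix~\ref{appendixA} (where $m_u$ is a \emph{spherical} average in $\mathbb{R}^2$) the actual weight picks up a factor of $t$ and a constant, namely $p(t)=2n(1-t^2)^{n-1}t$, which integrates to $1$ on $[0,1]$ --- this does not affect the argument since all that matters is $p\ge 0$, but it is worth getting right. Second, Theorem~\ref{ptBP} requires $\overline{\bigtriangleup}_p \tilde u\ge 0$ at \emph{every} finite point of $l\cap D$, not just at the one $z^0$ you fixed; since the unitary $T$ depends only on the direction of $l$, your computation applies verbatim at every point of $l\cap D\setminus u_{-\infty}$, and you should say so before invoking Theorem~\ref{ptBP}. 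The final ``removable-singularity'' remark is then unnecessary: once Theorem~\ref{ptBP} applies, $u|_l$ is subharmonic on all of $l\cap D$, including the $-\infty$ points.
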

\begin{proof}
By Proposition \ref{mshtomeanvalue} we can easily get the necessary condition. Now let the function $u$ is upper semi-continuous in the domain $ D \subset \mathbb{C}^n $ with $ u(z) \not\equiv -\infty $ such that $\overline{D} u(z)\geq 0 $ for all $ z \in D \setminus u_{-\infty} $. Let $l$ be a complex line. We shall show that $u|_l$ is subharmonic on $l\cap D$. Since in \eqref{upperD} the infimum is taken by all unitary matrices $T$ we can assume that $l=\{z_2=\ldots =z_n=0\}$. To prove subharmonicity of $u|_{l\cap D}$ we show that $\overline{\bigtriangleup}_p u|_{l\cap D}\ge 0$ for some weight function $p(t)\ge 0$ (see Theorem \ref{ptBP}).

Let us first work with the following difference 
$$M_u(z^0, Id, E(R,r))- M_{u(z_1,0,\ldots ,0)}(z^0, Id, E(R,r))=$$
$$=\frac {n!} {\pi^n R^{2} r^{2n-2}} \int\limits_{E(R,r)} ((u(z)-u(z_1,0,\ldots ,0))dV=  $$
$$=\frac {n!} {\pi^n R^{2} } \int\limits_{E(R,1)} (u(z_1,rz_2,\ldots ,rz_n)-u(z_1,0,\ldots ,0))dV  $$
Now we take $\varlimsup\limits_{r\to 0}$ and we have

$$\varlimsup_{r\to 0}\frac {n!} {\pi^n R^{2} } \int\limits_{E(R,1)} u(z_1,rz_2,\ldots ,rz_n)-u(z_1,0,\ldots ,0)dV  \le $$
$$\le \frac {n!} {\pi^n R^{2} } \int\limits_{E(R,1)} \varlimsup_{r\to 0}u(z_1,rz_2,\ldots ,rz_n)-u(z_1,0,\ldots ,0)dV =:I$$
Since $u$ is upper semi-continuous we have $\varlimsup\limits_{r\to 0}u(z_1,rz_2,\ldots ,rz_n)\le u(z_1,0,\ldots ,0)$ and so $I\le 0$. Hence
$$\varlimsup_{r\to 0} M_u(z^0, Id, E(R,r))\le \varlimsup_{r\to 0} M_{u(z_1,0,\ldots ,0)}(z^0, Id, E(R,r)).$$
Now we show that $ \varlimsup\limits_{r\to 0} M_{u(z_1,0,\ldots ,0)}(z^0, Id, E(R,r))=n^p_{u(z_1,0,\ldots ,0)}(z_1^0,R)$ (for definition of $n^p_u(z^0,r)$ see Appendix \ref{appendixA}).  Indeed,
$$\varlimsup_{r\to 0} M_{u(z_1,0,\ldots ,0)}(z^0, Id, E(R,r))=\frac {n!} {\pi^n R^{2} } \int\limits_{E(R,1)} u(z_1,0,\ldots ,0)dV = $$
$$=\frac {n} {\pi R^{2} }\int\limits_{|z_1|\le R}\left(1-\frac{|z_1|^2}{R^2}\right)^{n-1} u(z_1,0,\ldots ,0)dV(z_1)=n_u^p(z^0,R),$$
where $p(t)=n \cdot \left(1-t^2\right)^{n-1} t^{2n-1}$ (see the proof of Corollary \ref{cormeanvalue0}).
Finally, we have
$$\varlimsup_{r\to 0}\frac{M_u(z^0, T, E(R,r))-u(z^0)}{R^2}\le \frac { n_u^p (z^0,R)-u(z^0)} {R^2}.$$ 
Consequently,  $\overline{\bigtriangleup}_p u|_l\ge \overline{D}_{\mathrm{Id}} u$. Since $\overline{D}_{\mathrm{Id}} u \ge 0$ we have $\overline{\bigtriangleup}_p u|_\Pi\ge 0$.  Hence by Theorem \ref{ptBP} $u$ is subharmonic on $l\cap D$.
\end{proof} 

 \begin{remark} We can proof similar result in terms of $ E(r_1,\ldots ,r_n)$ ellipsoids. If we set
 $$ \overline{D}_T u(z^0)=\varlimsup_{r_1\to 0}\varlimsup_{r_2+\ldots +r_n\to 0}\frac{M_u(z^0, T, E(r_1,\ldots ,r_n))-u(z^0)}{r_1^2} $$
 or
 $$ \overline{D}_T u(z^0)=\varlimsup_{r_1\to 0}\ldots\varlimsup_{r_n\to 0}\frac{M_u(z^0, T, E(r_1,\ldots ,r_n))-u(z^0)}{r_1^2} $$
 and define $ \overline{D} u(z^0)= \inf_{T}\overline{D}_T u(z^0)$ as above using the same proof we will get the same result as Theorem \ref{analogBP}.  
 \end{remark}
 \begin{remark} Note that
  $\sup_{T}\overline{D}_T u\ge 0$ does not guaranty  plurisubharmonicity of $u$. Indeed, $u=|z_1|^2-|z_2|^2$ is not  $psh$  but we can show that $\overline{D}_{\mathrm{Id}} u\ge 0$. Before that we evaluate $ M_u(0, Id, E(R,r)) $.
  Assuming $z_1 = Rw_1, z_2 = rw_2$ we will have the following:
     $$ M_u(0, Id, E(R,r))= \frac{2}{\pi^2} \int\limits_{|w_1|^2 + |w_2|^2 \le 1 } (R^2|w_1|^2 - r^2|w_2|^2)dV'=
     $$ $$=\frac{2}{\pi^2} \int\limits_{|w_1|^2 + |w_2|^2 \le 1 } R^2|w_1|^2 dV'-\frac{2r^2}{\pi^2} \int\limits_{|w_1|^2 + |w_2|^2 \le 1 } |w_2|^2dV'$$
     So we have
     $\varlimsup\limits_{r\to 0}  M_u(0, Id, E(R,r))=cR^2 $,
     where $$c=\frac{2}{\pi^2} \int\limits_{|w_1|^2 + |w_2|^2 \le 1 } |w_1|^2 dV'>0.$$ Hence,
 $$ \overline{D}_{Id} u(0)=\varlimsup_{R\to 0}\varlimsup_{r\to 0}\frac{M_u(0, Id, E(R,r))-u(0)}{R^2} =c>0$$
  Similarly, we can show that  $\overline{D}_{\mathrm{Id}} u(z^0)> 0$ for any $z^0\in \mathbb{C}$. 

 \end{remark}
\appendix\label{appendix}
\section{$p(t)$-subharmonic functions}\label{appendixA}
In this section we introduce $ p(t)$-subharmonic functions in $ \mathbb{R}^n $ for continuous weight function $ p(t) $ with $ \int_{0}^{1}p(t)dt=1 $. In particular, we prove $p(t)$-subharmonic functions are subharmonic functions  for positive weight functions (see below Lemma \ref{frompsubtosub}).  We note that the auxiliary notion of $p(t)$-subharmonic function is very helpful in our work, since Corollary \ref{cormeanvalue0} is used in the proof of Proposition \ref{meanvaluetopsh} and the theorem \ref{ptBP} is used in the proof of  Theorem \ref{analogBP}.

\subsection{Relation between $p(t)$-subharmonic and subharmonic function}
Let $D\subset \mathbb{R}^n$ be a domain and $x^0\in D$. Let $p(t), 0\le t\le 1$ be a continuous function. Assume  $\int_{0}^{1}p(t)dt=1$. For $r>0$ set
\begin{equation}\label{npu}
  n_u^p(x^0,r):=\int_{0}^{1}p(t)m_u(x^0,rt)dt
\end{equation}
where $m_u(x^0,rt)$ is the mean value of $u$ on the sphere $S(z^0,rt)$ i.e.
$$ m_u(x^0,r): = \frac{1}{r^{n-1}\sigma_n} \int\limits_{S(x^0,r)} u(x)d\sigma.$$
\begin{remark}
If we choose $ p(t) $ in the following way $ p(t) = nt^{n-1}$, then $ n_u^p(z^0,r) = n_u(z^0,r) $, i.e. $ n_u^p(z^0,r) $ will equal to the mean value of the function $u$ over the ball $ B(z^0,r) $. 
\end{remark}

\begin{definition}\label{psubharmonic}
   Let $D\subset \mathbb{R}^n$. We say that $u$ is \emph{$p(t)$-subharmonic} on $D$ if it is upper semi-continuous on $D$ and for any $x^0\in D$  there exists $r_0(x^0)>0$ such that
   \begin{equation}\label{psubharmonicmean}
     u(x^0)\le  n_u^p(x^0,r) \mbox{ for any } r<r_0.
   \end{equation}
   If $u$ is $p(t)$-subharmonic at any point of $D$ then we say $u$ is $p(t)$-subharmonic on $D$.
   \end{definition}
\begin{theorem}[{see \cite[Theorem 4.12]{D97} or \cite{SSh21}}]
Let  $ D \subset \mathbb{R}^n $  be a domain. An upper semi-continuous function $ u:D \rightarrow [-\infty, \infty)$ is subharmonic if and only if 
 for every $ x^0 \in D $, there exists a sequence $ \{r_j\} $ decreasing to $ 0 $ such that  $u(x^0)\le  m_u(x^0,r_j)  $ for any $j$.
\end{theorem}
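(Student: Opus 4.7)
The forward implication is immediate: a subharmonic function satisfies $u(x^0)\le m_u(x^0,r)$ for every sufficiently small $r>0$, so in particular along any sequence $r_j\downarrow 0$. The substance is the converse, which I plan to establish by deducing the strong maximum principle on connected components of $D$, then invoking the standard equivalence of subharmonicity (for upper semi-continuous functions $\not\equiv -\infty$) with the harmonic majorant property.

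Fix a connected component $\Omega\subset D$ and suppose $u$ attains its supremum $M$ at some interior point. Set $F:=\{x\in\Omega:u(x)=M\}$; by upper semi-continuity $F$ is closed in $\Omega$, and I must show it is open. Given $y\in F$ and $r_0>0$ with $\bar B(y,r_0)\subset\Omega$, the hypothesis furnishes $r_j\downarrow 0$ with $r_j<r_0$ and $M=u(y)\le m_u(y,r_j)$. Since $u\le M$ on $\Omega$, this forces $m_u(y,r_j)=M$, hence $u=M$ holds $\sigma$-almost everywhere on each sphere $S(y,r_j)$. Upper semi-continuity then promotes this to $u\equiv M$ on all of $S(y,r_j)$: for any $z\in S(y,r_j)$, taking a sequence $x_n\to z$ within the full-measure set where $u=M$ gives $M=\limsup u(x_n)\le u(z)\le M$.

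The main obstacle is bridging the gap from "$F$ contains a family of concentric spheres accumulating at $y$" to "$F$ contains a neighborhood of $y$"; a countable union of spheres has measure zero and does not a priori cover an open set. My plan is to iterate the argument: every point $z$ of one of the spheres $S(y,r_j)$ already lies in $F$, so the hypothesis applied at $z$ yields further spheres $S(z,s_k(z))\subset F$, and chains of such spheres can be assembled to show that $F$ is dense in some ball $B(y,\delta)$. A final application of upper semi-continuity, exactly as in the preceding paragraph, upgrades density to equality and yields $B(y,\delta)\subset F$.

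With $F$ both open and closed and nonempty, connectedness of $\Omega$ forces $F=\Omega$, contradicting $u\not\equiv -\infty$ unless $u$ was constant to begin with; this is the strong maximum principle. Subharmonicity then follows by the usual comparison argument: for any closed ball $\bar B\subset D$ and any continuous $\varphi\ge u|_{\partial B}$, the Poisson integral $h_\varphi$ is harmonic on $B$ and continuous on $\bar B$, so $u-h_\varphi$ is upper semi-continuous on $\bar B$ with non-positive boundary values and still satisfies the sub-mean-value condition along sequences; the maximum principle yields $u\le h_\varphi$ on $B$. Choosing a decreasing sequence $\varphi_n\downarrow u|_{\partial B}$ and passing to the limit recovers $u(x)\le m_u(x,r)$ on every small ball, i.e.\ subharmonicity.
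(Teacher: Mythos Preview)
The paper does not give its own proof of this statement; it is quoted with citations to Demailly and to Sadullaev--Shopulatov, so there is nothing in the paper to compare your argument against.

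Your plan is the standard maximum-principle route and is essentially sound, but one step is left genuinely unfinished. The sentence ``chains of such spheres can be assembled to show that $F$ is dense in some ball $B(y,\delta)$'' is the crux, and as written it is only a hope: the radii $r_j(z)$ depend on $z$ with no uniformity, so a countable union of spheres need not be dense anywhere. A clean way to complete it is the line-segment argument: for $w\in B(y,\rho)$ with $\overline{B(y,\rho)}\subset\Omega$, set $t^\ast=\sup\{t\in[0,1]:y+t(w-y)\in F\}$; closedness of $F$ puts $z^\ast:=y+t^\ast(w-y)$ in $F$, and if $t^\ast<1$ then any sphere $S(z^\ast,r_j)\subset F$ with $r_j$ small meets the segment strictly beyond $z^\ast$, contradicting maximality. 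This gives $B(y,\rho)\subset F$ directly, so the separate ``density then upgrade by upper semi-continuity'' step is unnecessary. Alternatively one bypasses the open--closed argument entirely by working from the start with $v=u-h_\varphi$ on a closed ball, taking $y$ to be a point of $\{v=\max v\}$ at maximal distance from the center, and observing that any sphere $S(y,r_j)$ contained in this set produces points farther out.

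One minor slip: the clause ``contradicting $u\not\equiv-\infty$'' is misplaced. The theorem carries no such hypothesis, and in any case $F=\Omega$ is not a contradiction but exactly the conclusion of the strong maximum principle; you then correctly apply that principle not to $u$ but to $u-h_\varphi$ on a ball in the final paragraph.
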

   \begin{lemma}\label{frompsubtosub}
    Assume $p(t)\ge 0$. Then $u$ is  $p(t)$-subharmonic if and only if it is subharmonic.
   \end{lemma}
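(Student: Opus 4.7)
The plan is to prove the two directions separately. The forward implication is a direct consequence of the spherical mean value inequality for subharmonic functions together with $p \geq 0$, while the reverse implication will be obtained by contradiction, leveraging the sequential characterization of subharmonicity cited immediately above the lemma.

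For the forward direction, if $u$ is subharmonic then at every $x^0 \in D$ one has $u(x^0) \leq m_u(x^0, s)$ for all sufficiently small $s$. Fixing $x^0$ and choosing $r$ small enough so that $B(x^0, r) \subset D$, every radius of the form $rt$ with $t \in [0,1]$ lies below this threshold, so multiplying by $p(t) \geq 0$ and integrating yields
\[ u(x^0) = u(x^0)\int_0^1 p(t)\,dt \leq \int_0^1 p(t)\,m_u(x^0, rt)\,dt = n_u^p(x^0, r), \]
which is exactly the defining inequality of $p(t)$-subharmonicity.

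For the reverse direction I would argue by contradiction. Suppose $u$ is $p(t)$-subharmonic but not subharmonic on $D$. By the cited sequential theorem there exists $x^0 \in D$ such that no sequence $r_j \downarrow 0$ satisfies $u(x^0) \leq m_u(x^0, r_j)$; equivalently, there exists $\rho > 0$ with $m_u(x^0, s) < u(x^0)$ strictly for every $s \in (0, \rho)$. The strictness automatically forces $u(x^0) > -\infty$. Choosing any $r$ smaller than both $\rho$ and the radius $r_0(x^0)$ from the $p(t)$-subharmonicity hypothesis, I get $rt \in (0, \rho)$ for every $t \in (0, 1]$, hence the pointwise strict inequality $m_u(x^0, rt) < u(x^0)$ on all of $(0,1]$.

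The main obstacle, and the only point where the hypothesis $p \geq 0$ together with continuity is genuinely needed, is promoting this pointwise strict inequality to a strict inequality inside the integral defining $n_u^p(x^0, r)$. Since $p$ is continuous, non-negative, and normalized by $\int_0^1 p(t)\,dt = 1$, the set $U := \{ t \in [0,1] : p(t) > 0 \}$ is open and has positive Lebesgue measure. On $U \cap (0,1]$ the integrand $p(t)\bigl(u(x^0) - m_u(x^0, rt)\bigr)$ is strictly positive (the borderline case $m_u(x^0, rt) = -\infty$ only strengthens this), and it is non-negative elsewhere, so
\[ 0 < \int_0^1 p(t)\bigl(u(x^0) - m_u(x^0, rt)\bigr)\,dt = u(x^0) - n_u^p(x^0, r), \]
contradicting $u(x^0) \leq n_u^p(x^0, r)$. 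This contradiction forces $u$ to be subharmonic and completes the equivalence.
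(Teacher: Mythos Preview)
Your proof is correct and follows essentially the same approach as the paper: the easy direction integrates the spherical mean-value inequality against $p(t)\ge 0$, and the harder direction argues by contradiction via the sequential characterization of subharmonicity cited just above the lemma. You are in fact more careful than the paper in explaining why the strict pointwise inequality $m_u(x^0,rt)<u(x^0)$ survives integration (using that $\{p>0\}$ has positive measure), a step the paper asserts without comment.
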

   \begin{proof}
      Assume $u$ is a subharmonic function then there exists $r_0$ such that for any $0<t<r_0$ we have  $u(x^0)\le m_u(x^0,t)$. Since $p\ge 0$ we have \eqref{psubharmonicmean}.

      Let now $u$ be $p(t)$-subharmonic. It is enough to show that for any $x^0\in D$ there exists $r_j\searrow 0$ such that $u(x^0)\le m_u(x^0,r_j)$. Assume contrary, that for some point $x^0\in D$ there is not such a sequence. Then there exists $r_0>0$ such that $m_u(x^0,rt)< u(x^0)$ for any $r<r_0$. Since $p\ge 0$ and $\int_{0}^{1}p(t)dt=1$ we have  $n_u^p(x^0,r)< u(x^0)$. Contradiction. Hence $u$ is a subharmonic function.
   \end{proof}
   Similarly result can be given for harmonic functions. The following lemma for harmonic functions is true.
\begin{lemma}
  A continuous function $u$ on $D$ is harmonic if and only if for any $x^0\in D$ there exists $r_0>0$ such that for any $r<r_0$ the following holds
  \begin{equation}\label{pharmonic}
      n_u^p (x^0,r)=u(x^0).
  \end{equation}
 
\end{lemma}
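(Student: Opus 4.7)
The plan is to mimic the strategy used in Lemma \ref{frompsubtosub}, exploiting the fact that equality in \eqref{pharmonic} is symmetric under $u \mapsto -u$, so harmonicity should drop out from applying the $p(t)$-subharmonic/subharmonic equivalence to both $u$ and $-u$.

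For the necessity direction I would argue directly from the classical spherical mean value property: if $u$ is harmonic on $D$, then for every $x^0 \in D$ there is $r_0>0$ so that $m_u(x^0, \rho) = u(x^0)$ for all $\rho < r_0$. Substituting $\rho = rt$ into the definition \eqref{npu} of $n_u^p$ gives
\[
n_u^p(x^0, r) = \int_0^1 p(t)\, m_u(x^0, rt)\, dt = u(x^0) \int_0^1 p(t)\, dt = u(x^0),
\]
using the normalization $\int_0^1 p(t)\, dt = 1$.

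For the sufficiency direction, the key observation is that the map $u \mapsto n_u^p(x^0, r)$ is linear (since the spherical mean $m_u$ is linear in $u$), so $n_{-u}^p(x^0,r) = -n_u^p(x^0,r)$. Hence the equality $n_u^p(x^0, r) = u(x^0)$ yields both $u(x^0) \leq n_u^p(x^0, r)$ and $(-u)(x^0) \leq n_{-u}^p(x^0, r)$ for all $r < r_0$. By Definition \ref{psubharmonic}, $u$ and $-u$ are both $p(t)$-subharmonic on $D$. Applying Lemma \ref{frompsubtosub} (which uses the standing assumption $p(t) \ge 0$) to each of $u$ and $-u$, I conclude that both are subharmonic, hence $u$ is harmonic on $D$.

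The main obstacle, if any, is simply bookkeeping: one must verify that continuity of $u$ (which is part of the hypothesis here, unlike the merely upper semi-continuous case in Lemma \ref{frompsubtosub}) is compatible with applying the $p(t)$-subharmonic equivalence to $-u$, since $-u$ is lower semi-continuous in general but continuous here. So the reduction to Lemma \ref{frompsubtosub} is clean, and no genuinely new estimate is needed beyond linearity of $n_u^p$ in $u$ and the normalization of $p$.
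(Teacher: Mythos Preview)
Your proof is correct and arguably cleaner than the paper's, though both rest on Lemma~\ref{frompsubtosub}. The paper only applies that lemma once, to $u$, to get subharmonicity; it then observes that $m_u(x^0,rt)\ge u(x^0)$ for all $t$, so the equality $\int_0^1 p(t)\,m_u(x^0,rt)\,dt = u(x^0)$ with $p\ge 0$ forces $m_u(x^0,rt)=u(x^0)$ on the (nonempty) support of $p$, and monotonicity of the spherical mean for subharmonic functions then propagates this to all small radii, yielding the ball-mean identity $n_u(x^0,r)=u(x^0)$ and hence harmonicity. Your route instead exploits the linearity $n_{-u}^p=-n_u^p$ together with the continuity hypothesis (so that $-u$ is also upper semi-continuous) to apply Lemma~\ref{frompsubtosub} symmetrically to $u$ and $-u$; this avoids the monotonicity/support argument entirely. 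The paper's approach extracts slightly more information along the way (the spherical mean itself is identified), while yours is shorter and makes transparent why continuity, rather than mere upper semi-continuity, is the right hypothesis here.
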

\begin{proof}
If $u$ is harmonic then \eqref{pharmonic} is clear. Assume now $u$ satisfies \eqref{pharmonic}. Then by Lemma \ref{frompsubtosub} $u$ is a subharmonic function. Hence for any $x^0\in D$ we have $u(x^0)\le m_u(x^0,r)$. By definition of $n_u^p (x^0,r)$ we easily get $n_u (x^0,r)=u(x^0)$. Hence $u$ is a harmonic function.
\end{proof}
   The following corollary is used in the proof of Proposition \ref{meanvaluetopsh}.
\begin{corollary}\label{cormeanvalue0}
Let $u$ be an upper semi-continuous function in the domain $D\subset\mathbb{C}^k$ and $l$ be a natural number. Then $u$ is subharmonic on $D$ if and only if for any $z^0\in D$ there exists $r_0>0$ such that for any $r< r_0$ we have
$$u(z^0) \le \frac {(k+l)!} {\pi^{k} l! r^{2k}} \int\limits_{B(z^0, r)} {\left(1 - \frac {|z_{1}-z_1^0|^2  + \ldots+|z_{k}-z_k^0|^2 }{r^2}  \right)^l}u(z) dV  $$
where $B(z^0, r)$ is the ball centered at $z^0$ with radius $r$.
\end{corollary}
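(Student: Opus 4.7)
The plan is to reduce the corollary directly to Lemma \ref{frompsubtosub} by recognizing the right-hand side of the asserted inequality as $n_u^p(z^0, r)$ for a suitable continuous, non-negative weight $p(t)$ with $\int_0^1 p(t)\,dt = 1$. Once that identification is made, the inequality in the corollary is literally the $p(t)$-subharmonicity condition from Definition \ref{psubharmonic}, and Lemma \ref{frompsubtosub} (applicable because $p \ge 0$) gives the equivalence with subharmonicity.

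To identify $p$, I would view $\mathbb{C}^k$ as $\mathbb{R}^{2k}$ and pass to polar coordinates around $z^0$: writing $z = z^0 + s\omega$ with $s \in [0,r]$ and $\omega \in S^{2k-1}$, the volume element is $dV = s^{2k-1}\,ds\,d\sigma(\omega)$ and the radial factor of the integrand is $(1 - s^2/r^2)^l$. The angular integration produces $\sigma_{2k}\,m_u(z^0, s)$, and the substitution $s = rt$ then brings the radial integral into the form $r^{2k}\int_0^1 (1-t^2)^l t^{2k-1} m_u(z^0, rt)\,dt$. Using $\sigma_{2k} = 2\pi^k/(k-1)!$ and collecting prefactors, the right-hand side of the corollary equals $\int_0^1 p(t)\, m_u(z^0, rt)\,dt$ with
$$ p(t) \;=\; \frac{2(k+l)!}{(k-1)!\,l!}\,(1-t^2)^l\, t^{2k-1}. $$

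A short Beta-function computation (substitute $u = t^2$ and use $B(k, l+1) = (k-1)!\,l!/(k+l)!$) verifies that this $p$ is continuous, non-negative on $[0,1]$, and integrates to $1$, hence is an admissible weight in the sense of Appendix \ref{appendixA}. With this identification, the corollary's inequality becomes exactly $u(z^0) \le n_u^p(z^0, r)$ for every $r < r_0$, which is the $p(t)$-subharmonicity condition on $u$; Lemma \ref{frompsubtosub} then yields the equivalence with subharmonicity. The only real obstacle is the bookkeeping of constants, namely matching the prefactor $(k+l)!/(\pi^k l!)$ in the statement to the normalization coming from $\sigma_{2k}$ and the Beta integral, but this is routine and presents no conceptual difficulty.
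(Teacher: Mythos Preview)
Your proposal is correct and follows essentially the same route as the paper: both rewrite the weighted ball integral in spherical coordinates to identify it with $n_u^p(z^0,r)$ for the weight $p(t)=\frac{2(k+l)!}{(k-1)!\,l!}(1-t^2)^l t^{2k-1}$, verify via the Beta integral that $\int_0^1 p(t)\,dt=1$, and then invoke Lemma~\ref{frompsubtosub}. The constant bookkeeping you flag as the only obstacle is exactly what the paper spells out, and your values of $\sigma_{2k}$ and the Beta computation are correct.
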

\begin{proof} Without loss of generality take $z^0=0$. Then by using Fubini's theorem we have
\begin{align*}
  u(0)\le & \frac{(k+l)!} {\pi^{k} l! r^{2k}} \int\limits_{B(r)} {\left(1 - \frac {|z_{1}|^2  + \ldots+|z_{k}|^2 }{r^2}  \right)^l}u(z) dV = \\
  =  & \frac{(k+l)!} {\pi^{k} l! r^{2k}} \int\limits_{0}^{r}d\tau \int\limits_{S(0,\tau)} \left(1 - \frac {|z_{1}|^2  + \ldots+|z_{k}|^2 }{r^2}\right)^l u(z) d\sigma = \\
  =&\frac{(k+l)!} {\pi^{k} l! r^{2k}}  \int\limits_{0}^{r} \left(1 - \frac {\tau^2} {r^2}\right)^{l} d\tau \int\limits_{S(0,\tau)} u(z) d\sigma = \\
 = & \frac{(k+l)!} {\pi^{k} l! r^{2k}} \int\limits_{0}^{r}\left(1 - \frac {\tau^2} {r^2}\right)^{l} \cdot \frac{\tau^{2k-1} \cdot 2 \pi^{k}} {(k-1)!} \cdot m_{u}(0,\tau) d\tau  
\end{align*}
By changing $\tau=rt$ we have
\begin{align*}
    u(0)\le & \frac {2(k+l)!} {l!(k-1)!} \int\limits_{0}^{1}\left(1 - t^2\right)^{l} \cdot t^{2k-1} m_{u}(0,rt)dt = n_{u}^p(0,r)
    \end{align*}
where $p(t):=\frac {2(k+l)!} {l!(k-1)! }\left(1 - {t^2} \right)^{l} \cdot t^{2k-1}$. Consequently, it is enough to show that $\int\limits_0^1p(t)dt=1$. Indeed, we have
$$\int\limits_0^1p(t)dt=\frac {(k+l)!} {l!(k-1)! }\int\limits_0^1 (1-x)^lx^{k-1}dx=\frac {(l+k)!} {l!(k-1)! }\cdot\frac{\Gamma(l+1)\Gamma(k)}{\Gamma(l+k+1)}=1.$$

\end{proof}

\subsection{Blaschke-Privalov theorem for $p(t)$-subharmonic functions}
Now we prove Blaschke-Privalov type theorem for $p(t)$-subharmonic functions. Like \eqref{sphere} we define the following operator
\begin{equation}
     \overline{\bigtriangleup}_p u (x^0) :=A \cdot \varlimsup_{r\rightarrow +0} \frac { n_u^p (x^0,r)-u(x^0)} {r^2}.
 \end{equation}
 where $A^{-1}=\frac{1}{2n} \int\limits_0^1 t^2 p(t)dt$. Note that if $p(t)\ge 0$ then $A>0$.
 
 First of all we shall show that the operator $ \overline{\bigtriangleup}_p$ is actually Laplace operator for $C^2$ functions.
\begin{lemma}
  If $u\in C^2(D)$ then 
  \begin{equation}\label{plaplaceequlap}
      \overline{\bigtriangleup}_p u=  \bigtriangleup u.
  \end{equation}
\end{lemma}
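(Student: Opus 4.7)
The plan is to use the classical spherical mean asymptotic expansion for $C^2$ functions and substitute it into the definition of $n_u^p$. Recall that for $u \in C^2(D)$ and $x^0 \in D$ fixed, a second-order Taylor expansion of $u$ together with the fact that the spherical average kills odd monomials and averages the square of a single coordinate to $\rho^2/n$ gives
$$ m_u(x^0,\rho)=u(x^0)+\frac{\rho^2}{2n}\bigtriangleup u(x^0)+o(\rho^2) $$
as $\rho\to 0^+$, with the remainder controlled uniformly in terms of the modulus of continuity of the second partials of $u$ on a small closed ball around $x^0$.

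Next, I would substitute $\rho=rt$ for $t\in[0,1]$ into this expansion and integrate against $p(t)$, using $\int_0^1 p(t)\,dt=1$:
$$ n_u^p(x^0,r)=u(x^0)+\frac{r^2}{2n}\bigtriangleup u(x^0)\int_0^1 t^2 p(t)\,dt+\int_0^1 p(t)\cdot o(r^2t^2)\,dt. $$
The remainder integral is $o(r^2)$: fix a small ball $\overline{B(x^0,r_0)}\subset D$, let $\omega(\delta)$ be the modulus of continuity of the second partials of $u$ on it, then $|m_u(x^0,rt)-u(x^0)-\tfrac{(rt)^2}{2n}\bigtriangleup u(x^0)|\le C\omega(rt)(rt)^2\le C\omega(r)r^2 t^2$ uniformly in $t\in[0,1]$ for $r<r_0$, and $p$ is continuous hence bounded on $[0,1]$.

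Dividing by $r^2$ and taking $\varlimsup_{r\to 0^+}$ (which is an honest limit here) yields
$$ \varlimsup_{r\to 0^+}\frac{n_u^p(x^0,r)-u(x^0)}{r^2}=\frac{\bigtriangleup u(x^0)}{2n}\int_0^1 t^2 p(t)\,dt=\frac{\bigtriangleup u(x^0)}{A}. $$
Multiplying by $A$ gives $\overline{\bigtriangleup}_p u(x^0)=\bigtriangleup u(x^0)$, as required.

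The only non-routine point is justifying the uniform-in-$t$ control of the Taylor remainder in order to pull the $\varlimsup$ inside the integral against $p$; this is the step I would write out most carefully, but since $u\in C^2$ and $t$ ranges over the compact interval $[0,1]$, it reduces to a standard modulus-of-continuity estimate and presents no genuine obstacle.
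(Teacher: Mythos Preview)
Your proof is correct and follows essentially the same route as the paper: Taylor expand $u$, compute the spherical mean $m_u(x^0,rt)$, integrate against $p(t)$ over $[0,1]$, and identify the constant $A^{-1}=\frac{1}{2n}\int_0^1 t^2 p(t)\,dt$. If anything, your treatment of the remainder is more careful than the paper's, which simply asserts $\int_0^1 p(t)\,o(r^2t^2)\,dt=o(r^2)$ without spelling out the uniform-in-$t$ estimate that you supply via the modulus of continuity of the second partials.
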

\begin{proof}
Without loss of generality we shall prove \eqref{plaplaceequlap} at $x=0$. As $u\in C^2(D)$ and assuming that $ 0 \in D $ there exists $r>0$ such that we have the following decomposition:
$$ u(x) = u(0) + \sum\limits_{i=1}^n x_i \frac{\partial u}{\partial x_i} (0) + \frac{1}{2!} \sum\limits_{i,j = 1}^n x_i x_j \frac{\partial^2 u}{\partial x_i \partial x_j} (0) + o(r^2t^2), $$
where $ r^2t^2 = \sum\limits_{i=1}^n x_i^2 $ and $0\le t\le 1$. After averaging both sides of the last equality by the sphere $ S(0,rt)$ where $ 0 < r < \rho(0,D) $ we have
\begin{align*}
    m_u(0,rt) - u(0) = & \frac{1}{2 \sigma_n (rt)^{n-1}}  \sum\limits_{i=1}^n \frac{\partial^2 u}{\partial x_i^2} (0) \int\limits_{S(0,rt)} x_i^2 d\sigma  + o(r^2t^2) \\
    =&\frac{1}{2n}  \sum\limits_{i=1}^n \frac{\partial^2 u}{\partial x_i^2} (0) r^2t^2  + o(r^2t^2)
\end{align*}
and then multiplying both sides to $ p(t) $ and integrating by $0\le t\le 1$ we get
\begin{align*}
  n_u^p(0,r) - u(0)=  & \triangle u(0) \cdot \frac{r^2}{2n} \int\limits_0^1 t^2 p(t)dt + \int_0^1 p(t)o(r^2t^2)dt
 \end{align*}
Note that $ \int_0^1 p(t)o(r^2t^2)dt=o(r^2)$ when $r\to 0$. So we have
 \begin{align*}
     n_u^p(0,r) - u(0)= &A^{-1} r^2\triangle u(0)   + o(r^2).
 \end{align*} 
Consequently, we have \eqref{plaplaceequlap}.
\end{proof}
Now we state similar result as Theorem \ref{B-P}
 \begin{theorem}\label{ptBP} Assume $p(t)\ge 0$. An upper semi-continuous in the domain $D \subset \mathbb {R}^{n}$ function $u(x)$, $u(x)\not\equiv -\infty,$  is $ p$-subharmonic if and only if
$$ \overline{\bigtriangleup}_p u(x)\geq 0 \quad \mbox{ for all }  x^0\in D\setminus u_{-\infty}. $$
 \end{theorem}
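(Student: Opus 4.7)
The plan is to prove both implications using the $C^2$ lemma (that $\overline{\bigtriangleup}_p$ coincides with the classical Laplacian on $C^2$ functions) together with Lemma \ref{frompsubtosub}, which identifies $p(t)$-subharmonic with subharmonic when $p \geq 0$. The necessity is immediate: if $u$ is $p(t)$-subharmonic, then by Lemma \ref{frompsubtosub} it is subharmonic, so for each $x^0 \in D \setminus u_{-\infty}$ there is an $r_0 > 0$ with $u(x^0) \leq n_u^p(x^0, r)$ for all $0 < r < r_0$; the ratio defining $\overline{\bigtriangleup}_p u(x^0)$ is therefore non-negative for small $r$, and since $A > 0$ (because $p \geq 0$ and $\int_0^1 p\,dt = 1$ force $\int_0^1 t^2 p\,dt > 0$), we conclude $\overline{\bigtriangleup}_p u(x^0) \geq 0$.

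For sufficiency I would establish the following maximum-principle statement, which is a standard characterization of subharmonicity: for every closed ball $\overline{B(x_0, R)} \subset D$ and every $h$ continuous on $\overline{B}$ and harmonic in $B$, if $u \leq h$ on $\partial B$ then $u \leq h$ on $B$. Arguing by contradiction, suppose $M := \sup_B (u - h) > 0$. By upper semicontinuity, $u - h$ attains its supremum on $\overline{B}$ at some point $x^*$, which must lie in $B$ since $u - h \leq 0$ on $\partial B$. Choose $\epsilon > 0$ so small that $\epsilon \cdot (\mathrm{diam}\,B)^2 < M/2$ and set
$$
v_\epsilon(x) := u(x) - h(x) + \epsilon |x - x^*|^2.
$$
Then $v_\epsilon(x^*) = M$ but $v_\epsilon \leq M/2$ on $\partial B$, so the u.s.c.\ function $v_\epsilon$ attains its maximum over $\overline{B}$ at some interior point $y^* \in B$ with $v_\epsilon(y^*) \geq M > 0$; in particular $u(y^*) > -\infty$ and the hypothesis gives $\overline{\bigtriangleup}_p u(y^*) \geq 0$.

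The contradiction now comes from computing $\overline{\bigtriangleup}_p v_\epsilon(y^*)$ two ways. Since $y^*$ is a global maximum of $v_\epsilon$ on $\overline{B}$, for every small $\rho$ and every $t \in (0,1]$ we have $m_{v_\epsilon}(y^*, \rho t) \leq v_\epsilon(y^*)$, so integrating against the non-negative weight $p$ yields $n_{v_\epsilon}^p(y^*, \rho) \leq v_\epsilon(y^*)$, whence $\overline{\bigtriangleup}_p v_\epsilon(y^*) \leq 0$. On the other hand, the averaging operator $n^p$ is linear in the function, and the $C^2$ lemma applied to $h$ and to $\phi(x) = |x - x^*|^2$ (with $\bigtriangleup h = 0$ and $\bigtriangleup \phi = 2n$) shows that the ratios $(n_h^p(y^*,\rho) - h(y^*))/\rho^2$ and $(n_\phi^p(y^*,\rho) - \phi(y^*))/\rho^2$ converge, as $\rho \to 0$, to $0$ and $2n/A$ respectively. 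Since these are actual limits, the $\limsup$ of the sum splits and one obtains
$$
\overline{\bigtriangleup}_p v_\epsilon(y^*) \;=\; \overline{\bigtriangleup}_p u(y^*) \;+\; 2n\epsilon \;\geq\; 2n\epsilon \;>\; 0,
$$
contradicting $\overline{\bigtriangleup}_p v_\epsilon(y^*) \leq 0$. This maximum principle yields subharmonicity of $u$, hence $p(t)$-subharmonicity by Lemma \ref{frompsubtosub}. The main technical point to handle carefully is precisely the last split of the $\limsup$ across the three summands; it is legitimate only because two of them are genuine limits supplied by the $C^2$ lemma, and it is here (together with the very definition of $A$) that the hypothesis $p \geq 0$ is essential.
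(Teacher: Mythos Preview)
Your proof is correct and follows essentially the same approach as the paper's: both prove sufficiency by comparing $u$ with a harmonic majorant on a ball, perturbing by a small quadratic, and deriving a contradiction at an interior maximum from $\overline{\bigtriangleup}_p v \le 0$ versus $\overline{\bigtriangleup}_p v \ge 2n\varepsilon > 0$. The only differences are cosmetic---the paper centers its quadratic $w(x)=\|x-x^0\|^2-r^2$ at the ball's center and uses the Poisson integral $P[\varphi]$, whereas you center at the maximum point $x^*$ and are more explicit about why the $\varlimsup$ splits and about the final appeal to Lemma~\ref{frompsubtosub}.
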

 The proof of the above theorem is similar to the proof of classical Blaschke-Privalov theorem but there are some technical changes. Therefore we will give the proof for reader's convenience.
 \begin{proof}[{Proof of the Theorem \ref{ptBP}}]
 The necessity of the theorem is clear. If $ u(x) $ is $p(t)$-subharmonic function in $ D $ then for all $  z^0 \in D $ we have $ u(z^0)\le n_u^p(z^0,r) \mbox{ for any } r<r_0 $, which is equivalently to $ \overline{\bigtriangleup}_p u(x)\geq 0$  for all $ x^0\in D\setminus u_{-\infty}$.
 
 In order to prove the sufficiency of the theorem take  any  ball $ B(x^0,r) \subset \subset D $ and any continuous function $\varphi$ on $  S(x^0,r)$ such that $u|_{\partial B}\le \varphi$.   It is well known that if for any $\varphi$ as above  the following inequality holds $ P[\varphi] - u \ge 0 $  where
 $$P[\varphi](x) = \int\limits_{S(x^0,r)} \varphi(y) \frac{r^2 - |x-x^0|^2}{\sigma_n r |x-y|^n} d\sigma(y),\ n \ge 2 $$ is the Poisson mean of $\varphi$, then $u$ is a subharmonic function (see for example \cite{S12}). Note that $P[\varphi]$ is a harmonic function.

The following function $ w(x) = \|x-x^0\|^2 -r^2  $ is subharmonic with laplacian $ \triangle w = 2n $ and it is identically equal to zero on the sphere $ S(x^0,r) $. Now we consider the following auxiliary function $ v = u -  P[\varphi] + \varepsilon w  $, on the ball $ B(x^0,r) $, it is upper semi-continuous and $ v < +\infty $. It is enough to show that $ v < 0 $ for any $ \varepsilon > 0 $. Assume by contrary. Then since $v$ is upper semi-coninuous  $ v $ must attain strictly finite positive maximum at the point $ x $ where $ u $ is finite. Note that we always have 
$$n_u^p(x^0,r)\le \sup\limits_{\overline {B(x^0,r)}} u(x). $$ Hence if $ v $ attains its maximum at the point $ x $  by definition we have $ \overline{\bigtriangleup}_p v(x)\le 0$. On the other hand at this point must hold the following relations:
$$ \overline{\bigtriangleup}_p v(x) = \varepsilon \cdot \triangle w + \overline{\bigtriangleup}_p u(x) = 2n\varepsilon + \overline{\bigtriangleup}_p u(x) > \overline{\bigtriangleup}_p u(x) \ge 0.$$
Consequently, $ \overline{\bigtriangleup}_p v(x) > 0 $. Contradiction.
  \end{proof}

\end{document}